\newtheorem{proposition}{Proposition}[section]
\newtheorem{lemma}[proposition]{Lemma}
\newtheorem{corollary}[proposition]{Corollary}
\newtheorem{theorem}[proposition]{Theorem}
\theoremstyle{definition}
\newtheorem{definition}[proposition]{Definition}
\newtheorem{example}[proposition]{Example}
\newtheorem{remark}[proposition]{Remark}
\newcommand{\thlabel}[1]{\label{th:#1}}
\newcommand{\thref}[1]{Theorem~\ref{th:#1}}
\newcommand{\selabel}[1]{\label{se:#1}}
\newcommand{\seref}[1]{Section~\ref{se:#1}}
\newcommand{\lelabel}[1]{\label{le:#1}}
\newcommand{\leref}[1]{Lemma~\ref{le:#1}}
\newcommand{\prlabel}[1]{\label{pr:#1}}
\newcommand{\prref}[1]{Proposition~\ref{pr:#1}}
\newcommand{\colabel}[1]{\label{co:#1}}
\newcommand{\coref}[1]{Corollary~\ref{co:#1}}
\newcommand{\relabel}[1]{\label{re:#1}}
\newcommand{\reref}[1]{Remark~\ref{re:#1}}
\newcommand{\exlabel}[1]{\label{ex:#1}}
\newcommand{\exref}[1]{Example~\ref{ex:#1}}
\newcommand{\delabel}[1]{\label{de:#1}}
\newcommand{\deref}[1]{Definition~\ref{de:#1}}
\newcommand{\eqlabel}[1]{\label{eq:#1}}
\newcommand{\equref}[1]{(\ref{eq:#1})}
\newcommand{\Cc}{\mathcal{C}}
\def\*C{{}^*\hspace*{-1pt}{\Cc}}
\def\text#1{{\rm {\rm #1}}}
\begin{document}
\title[The global extension problem for Leibniz algebras]
{The global extension problem, co-flag and metabelian Leibniz
algebras}

\author{G. Militaru}
\address{Faculty of Mathematics and Computer Science, University of Bucharest, Str.
Academiei 14, RO-010014 Bucharest 1, Romania}
\email{gigel.militaru@fmi.unibuc.ro and gigel.militaru@gmail.com}

\subjclass[2010]{16T10, 16T05, 16S40}

\thanks{This work was supported by a grant of the Romanian National
Authority for Scientific Research, CNCS-UEFISCDI, grant no.
88/05.10.2011.}

\subjclass[2010]{17A32, 17B05, 17B56} \keywords{Leibniz algebras: the extension problem, non-abelian cohomology,
metabelian and co-flag structures.}


\begin{abstract}
Let $\mathfrak{L}$ be a Leibniz algebra, $E$ a vector space and
$\pi : E \to \mathfrak{L}$ an epimorphism of vector spaces with $
\mathfrak{g} = {\rm Ker} (\pi)$. The global extension problem asks
for the classification of all Leibniz algebra structures that can
be defined on $E$ such that $\pi : E \to \mathfrak{L}$ is a
morphism of Leibniz algebras: from a geometrical viewpoint this means
to give the decomposition of the groupoid of all such structures in its connected
components and to indicate a point in each component. All such Leibniz algebra
structures on $E$ are classified by a global cohomological
object ${\mathbb G} {\mathbb H} {\mathbb L}^{2} \, (\mathfrak{L}, \, \mathfrak{g})$
which is explicitly constructed. It is shown that ${\mathbb G} {\mathbb H} {\mathbb L}^{2} \,
(\mathfrak{L}, \, \mathfrak{g})$ is the coproduct of all local
cohomological objects $ {\mathbb H} {\mathbb L}^{2} \,  \,
(\mathfrak{L}, \, (\mathfrak{g}, [-,-]_{\mathfrak{g}}))$ that are
classifying sets for all extensions of $\mathfrak{L}$
by all Leibniz algebra structures $(\mathfrak{g},
[-,-]_{\mathfrak{g}})$ on $\mathfrak{g}$. The
second cohomology group ${\rm HL}^2 \, (\mathfrak{L}, \, \mathfrak{g})$ of Loday and Pirashvili appears
as the most elementary piece among all components of ${\mathbb G} {\mathbb H} {\mathbb L}^{2} \,
(\mathfrak{L}, \, \mathfrak{g})$. Several examples are
worked out in details for co-flag Leibniz algebras over
$\mathfrak{L}$, i.e. Leibniz algebras $\mathfrak{h}$ that have a
finite chain of epimorphisms of Leibniz algebras $\mathfrak{L}_n :
= \mathfrak{h} \stackrel{\pi_{n}}{\longrightarrow}
\mathfrak{L}_{n-1} \, \cdots \, \mathfrak{L}_1 \stackrel{\pi_{1}}
{\longrightarrow} \mathfrak{L}_{0} := \mathfrak{L}$ such that
${\rm dim} ( {\rm Ker} (\pi_{i}) ) = 1$, for all $i = 1, \cdots,
n$. Metabelian Leibniz algebras are introduced, described and classified
using pure linear algebra tools.
\end{abstract}

\maketitle

\section*{Introduction}
Introduced by Bloh \cite{Bl1} and rediscovered by Loday
\cite{Lod2} the study of Leibniz algebras has met with an
explosion of interest in the last twenty years after the papers
\cite{cu, Lod2, LoP} that set the foundations of the theory were
published. There are two reasons for this interest. The first one
is purely theoretical: Leibniz algebras are non-commutative
generalizations of Lie algebras and hence several classical
theorems or larger topics for study (the classification problem,
the cohomology theory) from Lie algebras were extended to Leibniz
algebras. The second and strongest reason consists of the
interactions and applications of Leibniz algebras in several areas
of mathematics and mathematical physics such as: classical or
non-commutative differential geometry, vertex operator algebras,
the Godbillon-Vey invariants for foliations, integrable systems,
representation theory, etc. For additional explanations and
motivations we refer to \cite{AOR, AAO, Bar, ACM, cam, CK, CC,
CLOK, CI, casas, FM, Go, hu, KW2000, kosm, mas, RHa, RHb} and the
references therein.

Recently \cite{am-2013c} the following problem called the \emph{extending structures problem},
was addressed at the level of Leibniz algebras generalizing the one from Lie algebras
\cite{am-2013b} (see also \cite{am-2011, am-2010, am-2013a}). Equivalently restated, it consists
of the following question: let $\mathfrak{g}$ be a Leibniz  algebra, $E$ a vector space
and $i: \mathfrak{g} \to E$ a linear injective map.
Describe and classify the set of all Leibniz algebra structures $[-, -]_E$ that can be defined on $E$
such that $i: \mathfrak{g} \to (E, \, [-, -]_E)$
is a morphism of Leibniz algebras. It becomes tempting and natural to look at the categorical dual
of it by just reversing the arrow from the input data of the extending structures problem.
We arrive at the following question that is the subject of this paper:

\textbf{The global extension problem.} \textit{
Let $\mathfrak{L}$ be a Leibniz algebra, $E$ a vector space and
$\pi : E \to \mathfrak{L}$ an epimorphism of vector spaces.
Describe and classify the set of all Leibniz algebra structures
$[-, -]_E$ that can be defined on $E$
such that $\pi : (E, \, [-, -]_E ) \to \mathfrak{L}$ is a
morphism of Leibniz algebras.}

If $\mathfrak{L} = \{0\}$, then the GE-problem asks for the classification of all
Leibniz algebra structures on an arbitrary vector space $E$. This is one of the problems that have been extensively
studied during the last years and it is very difficult for vector spaces
of large dimension: the classification of all Leibniz algebras is known up
to dimension $4$ and this result was obtained recently \cite{CK}.
For this reason, from now on we will assume that $\mathfrak{L} \neq \{0\}$.
Let us explain the significance of the word \emph{'global'} from the
above question and make the connection with the classical extension problem that
was approached in \cite{LoP} only for the abelian case.
Let $\mathfrak{g}$ and $\mathfrak{L}$ be two given Leibniz
algebras. The extension problem asks for the classification of all
extensions of $\mathfrak{L}$ by $\mathfrak{g}$, i.e. of all
Leibniz algebras $\mathfrak{E}$ that fit into an exact sequence
\begin{eqnarray*}
\xymatrix{ 0 \ar[r] & \mathfrak{g} \ar[r]^{i} & \mathfrak{E}
\ar[r]^{\pi} & \mathfrak{L} \ar[r] & 0 }
\end{eqnarray*}
The classification is up to an isomorphism of Leibniz algebras
that stabilizes $\mathfrak{g}$ and co-stabilizes $\mathfrak{L}$
and we denote by ${\mathcal E} \,  (\mathfrak{L}, \,
\mathfrak{g})$ the isomorphism classes of all extensions of
$\mathfrak{L}$ by $\mathfrak{g}$ up to this equivalence relation.
If $\mathfrak{g}$ is abelian, then ${\mathcal E} \, (\mathfrak{L},
\, \mathfrak{g}) \cong {\rm HL}^2 (\mathfrak{L}, \,
\mathfrak{g})$, where ${\rm HL}^2 (\mathfrak{L}, \, \mathfrak{g})$
is the second cohomology group \cite[Proposition 1.9]{LoP}. Now,
if $(E, \, [-, -]_E)$ is a Leibniz algebra structure on $E$ such
that $\pi: (E, \, [-, -]_E) \to \mathfrak{L}$ is a morphism of
Leibniz algebras, then $(E, \, [-, -]_E)$ is an extension of
$\mathfrak{L}$ by $\mathfrak{g}: = {\rm Ker} (\pi)$, which is a
Leibniz subalgebra of $(E, \, [-, -]_E)$. But this Leibniz algebra
structure on $\mathfrak{g}$ is not fixed from the input data as in
the case of the extension problem: it depends essentially on the
Leibniz algebra structures on $E$ which we are looking for. Thus,
we can conclude that the classical extension problem is the
\emph{'local'} version of the GE-problem: namely, the one in which
the Leibniz algebra structure on ${\rm Ker} (\pi)$ is fixed. In
other words, the GE-problem can be viewed as a \emph{dynamical}
extension problem since the structures of Leibniz algebras on
$\mathfrak{g}$ are not fixed. As in the classical extension
problem, by classification of Leibniz algebra structures on $E$ we
mean the classification up to an isomorphism of Leibniz algebras
$(E, \, [- , - ]_E) \cong (E, \, [- , - ]^{'}_E)$ that stabilizes
$\mathfrak{g} = {\rm Ker} (\pi)$ and co-stabilizes $\mathfrak{L}$.
In this case we shall say that the Leibniz algebra structures
$\{-, \, -\}_E$ and $\{-, \, -\}^{'}_E $ on $E$ are
\emph{cohomologous} and we denote this by $(E, \{-, \, -\}_E)
\approx (E, \{-, \, -\}^{'}_E)$. Let ${\rm Gext} (\mathfrak{L}, \,
E)$ be the isomorphism classes of all Leibniz algebra structures
on $E$ such that $\pi: E \to \mathfrak{L}$ is a morphism of
Leibniz algebras via this identification. The answer to the
GE-problem will be given if we parameterize and explicitly compute
${\rm Gext} (\mathfrak{L}, \, E)$. A more conceptual explanation
is given in \seref{prel}: the set of all Leibniz algebra
structures $\{-, \, -\}_E$ that can be defined on the vector space
$E$ such that $\pi : (E, \, \{-, \, -\}_E) \to \mathfrak{L}$ is a
morphism of Leibniz algebras is a groupoid, denoted by ${\mathcal
C} (E, \, \mathfrak{L})$, and the answer to the GE-problem from a
geometrical viewpoint means to give the decomposition of this
groupoid in its connected components and to indicate a point in
each component.

The paper is organized as follows: In \seref{prel} we recall the basic concept and the
definition of the crossed products of Leibniz algebras.
\seref{globalex} contains the main results of the paper. \thref{extensioliecros} shows that
any Leibniz algebra structure $(E, \, [-, \, -]_E)$ such that $\pi : (E, [-, \, -]_E) \to
\mathfrak{L}$ is a morphism of Leibniz algebras
is cohomologous to a crossed product. Based on this,
\thref{main1222} gives the theoretical answer to the
GE-problem by explicitly constructing a
global non-abelian cohomological type object, denoted by ${\mathbb G} {\mathbb H} {\mathbb
L}^{2} \, (\mathfrak{L}, \, \mathfrak{g})$, which will
parameterize ${\rm Gext} \, (\mathfrak{L},
\, E)$. The relation between the GE-problem and the classical (i.e. local)
extension problem for Leibniz algebras and the decomposition of ${\mathbb G} {\mathbb H} {\mathbb
L}^{2} \, (\mathfrak{L}, \, \mathfrak{g})$ as the coproduct of all local
cohomological 'groups' $ {\mathbb H} {\mathbb L}^{2} \,  \,
(\mathfrak{L}, \, (\mathfrak{g}, [-,-]_{\mathfrak{g}}))$ is given in
\coref{desccompcon}. We mention that ${\mathbb H} {\mathbb L}^{2} \,  \,
(\mathfrak{L}, \, (\mathfrak{g}, [-,-]_{\mathfrak{g}}))$ are
classifying sets for all extensions of $\mathfrak{L}$
by $(\mathfrak{g}, [-,-]_{\mathfrak{g}})$. The second cohomology group
${\rm HL}^2 \, (\mathfrak{L}, \, \mathfrak{g})$ of Loday and Pirashvili \cite{LoP} is just
one piece, and the most elementarily described one at that, among all components of ${\mathbb H} {\mathbb L}^{2} \,  \,
(\mathfrak{L}, \, (\mathfrak{g}, [-,-]_{\mathfrak{g}}))$. More precisely,
${\rm HL}^2 \, (\mathfrak{L}, \, \mathfrak{g}) = {\mathbb H} {\mathbb L}^{2} \,  \,
(\mathfrak{L}, \, (\mathfrak{g}, [-,-]_{\mathfrak{g}} = 0 ))$. Metabelian
Leibniz algebras are introduced as crossed products of abelian Leibniz algebras: the explicit bracket
and their classification is proved in
\coref{cazuabspargere} in terms of linear algebra tools. An explicit example is given in
\coref{metadim3clas}: in particular, a class of metabelian $n+1$-dimensional Leibniz algebras are described and classified by
the set of matrices $(A, \, B, \,  \gamma) \in {\rm M}_n (k) \times {\rm M}_n (k) \times k^n$ satisfying the condition:
$AB = BA = - B^2$ and $B \gamma = 0$ (\coref{meta10}).
Computing the classifying object ${\mathbb G} {\mathbb H} {\mathbb
L}^{2} \, (\mathfrak{L}, \, \mathfrak{g})$ is a highly nontrivial problem:
recall that for $\mathfrak{L} := 0 $ this object parameterizes all isomorphism classes of Leibniz algebras that can be defined on a
given vector space $\mathfrak{g}$. In \seref{coflag}
we shall identify a way of computing this object for a class of what we have called
co-flag Leibinz algebras over $\mathfrak{L}$
as defined in \deref{coflaglbz}. All co-flag Leibniz algebras over $\mathfrak{L}$
can be completely described by a recursive
reasoning where the key step is the case when $\mathfrak{g} := {\rm Ker} (\pi)$ has
dimension $1$. This case is solved in \thref{clascoflagl} where ${\mathbb G} {\mathbb H} {\mathbb
L}^{2} \, (\mathfrak{L}, \, k)$ is completely described. Several examples are worked out in details
for co-flag Leibniz algebras as well as for metabelian Leibniz algebras.

\section{Preliminaries}\selabel{prel}
For two sets $X$ and $Y$ we shall denote by $X \sqcup Y$ the
coproduct in the category of sets of $X$ and $Y$, i.e. $X \sqcup
Y$ is the disjoint union of $X$ and $Y$. All vector spaces,
Leibniz algebras, linear or bilinear maps are over an arbitrary
field $k$. A map $f: V \to W$ between two vector spaces is called
the \emph{trivial} map if $f (v) = 0$, for all $v\in V$. A Leibniz
algebra is a vector space $\mathfrak{L}$ with a bilinear map $[- ,
\, -] : \mathfrak{L} \times \mathfrak{L} \to \mathfrak{L}$
satisfying the Leibniz law:
\begin{equation}\eqlabel{lz}
[x, \, [y, \, z] \, ] = [ \, [x, \, y], \, z ] - [ \,[x, \, z], \,
y ]
\end{equation}
for all $x$, $y$, $z\in \mathfrak{L}$. Any Lie algebra is a
Leibniz algebra and a Leibniz algebra satisfying $[x, \, x] = 0$,
for all $x \in \mathfrak{L}$ is a Lie algebra. Any vector space
$\mathfrak{L}$ has a structure of Leibniz algebra with the
trivial bracket: $[x, \, y] = 0$, for all $x$, $y \in
\mathfrak{L}$. Such a Leibniz algebra is called \emph{abelian}
and will be denoted throughout the paper by
$\mathfrak{L}_0$. For several examples of Leibniz algebras we refer to \cite{Lod2},
\cite{LoP}. ${\rm Der} (\mathfrak{L})$ (resp. ${\rm ADer} (\mathfrak{L})$) stands for the
space of all derivations (resp. anti-derivations \cite{Lod2},
\cite{am-2013c}) of a Leibniz algebra $\mathfrak{L}$, that is, all linear maps
$\Delta: \mathfrak{L} \to \mathfrak{L}$ (resp. $D : \mathfrak{L}
\to \mathfrak{L}$) such that for any $x$, $y \in \mathfrak{L}$ we
have:
$$
\Delta ([x, \, y]) = [\Delta(x), \, y] + [x, \, \Delta(y)], \quad
\bigl( \, {\rm resp.} \,\,\,  D ([x, \, y]) = [D(x), \, y] - [D(y), \, x] \, \bigl)
$$
Let $\mathfrak{L}$ be a given Leibniz algebra, $E$ a vector space and
$\pi : E \to \mathfrak{L}$ an epimorphism of vector spaces with $
\mathfrak{g} := {\rm Ker} (\pi)$. We define a small category
${\mathcal C} (E, \, \mathfrak{L})$ as follows: the objects of
${\mathcal C} (E, \, \mathfrak{L})$ are Leibniz algebra structures
$\{-, \, -\}_E$ on the vector space $E$ such that
$\pi : (E, \, \{-, \, -\}_E) \to \mathfrak{L}$ is a morphism of
Leibniz algebras. A morphism $\varphi: \{-, \, -\}_E \to \{-, \,
-\}^{'}_E$ in  ${\mathcal C} (E, \, \mathfrak{L})$ is
a morphism of Leibniz algebras $\varphi: (E, \{-, \, -\}_E) \to (E, \{-, \,
-\}^{'}_E)$ which stabilizes $\mathfrak{g}$ and co-stabilizes
$\mathfrak{L}$, i.e. the following diagram
\begin{eqnarray} \eqlabel{diagrama}
\xymatrix {& \mathfrak{g} \ar[r]^{i} \ar[d]_{Id} & {E}
\ar[r]^{\pi} \ar[d]^{\varphi} & \mathfrak{L} \ar[d]^{Id}\\
& \mathfrak{g} \ar[r]^{i} & {E}\ar[r]^{\pi } & \mathfrak{L}}
\end{eqnarray}
is commutative. In this case we shall say that the Leibniz algebra structures $\{-, \, -\}_E$ and
$\{-, \, -\}^{'}_E \in {\mathcal C} (E, \, \mathfrak{L})$ are
\emph{cohomologous} and we denote this by
$(E, \{-, \, -\}_E) \approx (E, \{-, \, -\}^{'}_E)$. The category
${\mathcal C} (E, \, \mathfrak{L})$ is a groupoid, i.e. any morphism is
an isomorphism - this fact can be proven as an elementary exercise using only
the above definition (an alternative and more conceptual
proof is given in \leref{HHH}). In particular, we obtain that $\approx$ is an
equivalence relation on the set of objects of ${\mathcal C} (E, \, \mathfrak{L})$ and we
denote by ${\rm Gext} \, (E, \, \mathfrak{L})$ the set of all equivalence classes via
$\approx$, i.e. ${\rm Gext} \, (E, \, \mathfrak{L}) := {\mathcal C} (E, \, \mathfrak{L})/\approx$.
${\rm Gext} \, (E, \, \mathfrak{L})$ is the classifying
object of the global extension problem: the theoretical answer, from an algebraic view point,
to the GE-problem will be given when we explicitly compute ${\rm Gext} \,
(E, \, \mathfrak{L})$ for a given Leibniz algebra $\mathfrak{L}$
and a vector space $E$. Of course, ${\rm Gext} \, (E, \,
\mathfrak{L})$ is empty if $\mathfrak{L}$ is finite dimensional
and ${\rm dim} (E) < {\rm dim} (\mathfrak{L})$. From a
geometrical view point the answer to the GE-problems means to give the
decomposition of the groupoid ${\mathcal C} (E, \, \mathfrak{L})$ in its connected
components and to indicate a 'point' in each such component.

\subsection*{Crossed products for Leibniz algebras.}
We recall the construction of crossed products of Leibniz algebras following the
terminology of \cite[Section 4]{am-2013c}.

\begin{definition} \delabel{crosslbz}
Let $\mathfrak{L} = (\mathfrak{L}, \, [-,\, -])$ be a Leibniz
algebra and $\mathfrak{g}$ a vector space. A \emph{pre-crossed
data} of $\mathfrak{L}$ by $\mathfrak{g}$ is a system $\Lambda
(\mathfrak{L}, \, \mathfrak{g}) = \bigl(\triangleleft, \,
\triangleright, \, f, \, [-, \, -]_{\mathfrak{g}} \bigl)$
consisting of four bilinear maps
$$
\triangleleft : \mathfrak{g} \times \mathfrak{L} \to \mathfrak{g},
\quad \triangleright: \mathfrak{L} \times \mathfrak{g} \to
\mathfrak{g}, \quad f: \mathfrak{L} \times \mathfrak{L} \to
\mathfrak{g}, \quad [-, \, -]_{\mathfrak{g}} \, : \mathfrak{g}
\times \mathfrak{g} \to \mathfrak{g}
$$
Let $\Lambda (\mathfrak{L}, \, \mathfrak{g}) =
\bigl(\triangleleft, \, \triangleright, \, f, \, [-, \,
-]_{\mathfrak{g}} \bigl)$ be a pre-crossed data of $\mathfrak{L}$
by $\mathfrak{g}$ and we denote by $ \mathfrak{g} \, \# \,
\mathfrak{L} = \mathfrak{g} \, \#_{\Lambda (\mathfrak{L}, \,
\mathfrak{g})} \, \mathfrak{L}$ the vector space $\mathfrak{g}
\times \, \mathfrak{L} $ with the bracket $\{-, \, - \}$ given for
any $g$, $h \in \mathfrak{g}$ and $x$, $y \in \mathfrak{L}$ by:
\begin{equation}\eqlabel{brackcrosspr}
\{(g, x), \, (h, y)\} := \bigl( [g, \, h]_{\mathfrak{g}} + x
\triangleright h + g \triangleleft y + f(x, y), \, [x, \, y ]
\bigl)
\end{equation}
Then $\mathfrak{g} \, \# \, \mathfrak{L}$ is called the
\emph{crossed product} associated to $\Lambda (\mathfrak{L}, \,
\mathfrak{g}) = \bigl(\triangleleft, \, \triangleright, \, f, \,
[-, \, -]_{\mathfrak{g}} \bigl)$ if it is a Leibniz algebra with
the bracket \equref{brackcrosspr}. In this case the pre-crossed
data $\Lambda (\mathfrak{L}, \, \mathfrak{g}) =
\bigl(\triangleleft, \, \triangleright, \, f, \, [-, \,
-]_{\mathfrak{g}} \bigl)$ is called a \emph{crossed system} of
$\mathfrak{L}$ by $\mathfrak{g}$ and we denote by ${\mathcal C}
{\mathcal S} (\mathfrak{L}, \, \mathfrak{g})$ the set of all
crossed systems of the Leibniz algebra $\mathfrak{L}$ by
$\mathfrak{g}$.
\end{definition}

The bracket \equref{brackcrosspr} generalizes the one introduced in
\cite[Section 1.7]{LoP}, which is recovered as a special case if
$[-, \, -]_{\mathfrak{g}} = 0$. The set of necessary and sufficient axioms that
need to be satisfied by a pre-crossed datum
$\Lambda (\mathfrak{L}, \, \mathfrak{g}) = \bigl(\triangleleft, \,
\triangleright, \, f, \, [-, \, -]_{\mathfrak{g}} \bigl)$ such
that $\mathfrak{g} \, \# \, \mathfrak{L}$ is a crossed product was
highlighted in \cite[Section 4]{am-2013c},
derived as a special case of \cite[Theorem 2.3]{am-2013c}. More
precisely, we have the following:\footnote{The next
proposition can be also proven directly by testing the Leibniz law for the bracket
\equref{brackcrosspr} in all points of the form $(g, 0)$ and $(0,
x)$, for all $g \in \mathfrak{g}$, $x \in \mathfrak{L}$.}

\begin{proposition}\prlabel{axiocross}
A pre-crossed data $\Lambda (\mathfrak{L}, \, \mathfrak{g}) =
\bigl(\triangleleft, \, \triangleright, \, f, \, [-, \,
-]_{\mathfrak{g}} \bigl)$ of a Leibniz algebra $\mathfrak{L}$ by a
vector space $\mathfrak{g}$ is a crossed system if and only if
the following compatibilities
hold for any $g$, $h \in \mathfrak{g}$ and $x$, $y$, $z \in
\mathfrak{L}$:
\begin{enumerate}
\item[(CS0)] $(\mathfrak{g}, \, [-, \, -]_{\mathfrak{g}})$ is a
Leibniz algebra;

\item[(CS1)] $[g, \, h]_{\mathfrak{g}} \, \triangleleft x = [g, \,
h \triangleleft x]_{\mathfrak{g}} + [g \triangleleft x, \,
h]_{\mathfrak{g}}$;

\item[(CS2)] $g \triangleleft [x, \, y] = (g \triangleleft x)
\triangleleft y - (g \triangleleft y) \triangleleft x - [g, \,
f(x, y)]_{\mathfrak{g}}$;

\item[(CS3)] $x \rhd f(y, \, z) = f(x, \, y) \triangleleft z -
f(x, \, z) \triangleleft y + f([x, \, y], \, z) - f([x, \, z], \,
y) - f(x, \, [y, \, z])$;

\item[(CS4)] $x \rhd [g, \, h]_{\mathfrak{g}} = [x \rhd g, \,
h]_{\mathfrak{g}} - [x \rhd h, \, g]_{\mathfrak{g}}$;

\item[(CS5)] $[x, \, y] \rhd g = x \rhd (y \rhd g) + (x \rhd g)
\triangleleft y - [f(x, \, y), \, g]_{\mathfrak{g}}$;

\item[(CS6)] $[g, \, h \triangleleft x]_{\mathfrak{g}} + [g, \, x
\rhd h]_{\mathfrak{g}} = 0$;

\item[(CS7)] $x \rhd (y \rhd g) + x \rhd (g \triangleleft y) = 0$.
\end{enumerate}
\end{proposition}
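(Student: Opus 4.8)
The plan is to verify directly that the bracket \equref{brackcrosspr} obeys the Leibniz law \equref{lz} if and only if the eight compatibilities hold, exploiting the fact that both sides of \equref{lz} are trilinear in their three arguments. The underlying observation is that the two expressions $[X,[Y,Z]]$ and $[[X,Y],Z]-[[X,Z],Y]$ are each $k$-trilinear in $(X,Y,Z)\in(\mathfrak{g}\,\#\,\mathfrak{L})^3$, so the Leibniz law holds for all triples precisely when it holds on a spanning set of $\mathfrak{g}\,\#\,\mathfrak{L}=\mathfrak{g}\times\mathfrak{L}$. Since this space is spanned by the elements $(g,0)$ with $g\in\mathfrak{g}$ and $(0,x)$ with $x\in\mathfrak{L}$, it suffices to test \equref{lz} on the $2^3=8$ triples obtained by letting each of $X$, $Y$, $Z$ be of one of these two types; this is the reduction already anticipated in the footnote to the statement.

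Next I would substitute into \equref{brackcrosspr} in each of the eight cases and compare the two sides componentwise. A useful simplification is that the second ($\mathfrak{L}$-)component of \equref{brackcrosspr} is simply $[x,y]$, so in every case the $\mathfrak{L}$-component of \equref{lz} collapses to the Leibniz law of $\mathfrak{L}$ itself and is automatically satisfied; only the first ($\mathfrak{g}$-)component produces a genuine constraint. I expect the two ``pure'' cases to be the cleanest: the triple $((g,0),(h,0),(l,0))$ returns exactly the Leibniz identity for $[-,-]_{\mathfrak{g}}$, that is (CS0), while the all-$\mathfrak{L}$ triple $((0,x),(0,y),(0,z))$ returns the cocycle-type identity (CS3) for $f$. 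The six remaining mixed triples then yield, after elementary rearrangement, the conditions (CS1), (CS2), (CS4) and (CS5), together with (CS6) and (CS7).

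The one point requiring care, and the only real obstacle, is that the correspondence between cases and conditions is not a clean bijection. Four of the mixed triples reproduce (CS1), (CS2), (CS4) and (CS5) essentially verbatim, but the triple $((g,0),(0,y),(h,0))$ yields (CS6) only after substituting (CS1), and the triple $((0,x),(h,0),(0,z))$ yields (CS7) only after substituting (CS5). One therefore has to process the eight componentwise relations in the right order and to track the signs coming from the antisymmetric shape of \equref{lz}; this bookkeeping is the bulk of the work.

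Once this is organized, both implications are immediate. For the ``only if'' direction, assuming $\mathfrak{g}\,\#\,\mathfrak{L}$ is a Leibniz algebra, one specializes \equref{lz} to the eight generating triples and combines the resulting relations (using the case giving (CS1) to extract (CS6), and the case giving (CS5) to extract (CS7)) to obtain (CS0)--(CS7). For the ``if'' direction, assuming (CS0)--(CS7), each of the eight componentwise identities holds (the rearrangements above being reversible), so \equref{lz} holds on all eight generating triples, whence by trilinearity it holds for arbitrary arguments and $\mathfrak{g}\,\#\,\mathfrak{L}$ is a Leibniz algebra.
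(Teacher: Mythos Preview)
Your approach is correct and is precisely the direct verification indicated in the paper's footnote: reduce the Leibniz identity to the eight generating triples and read off the conditions (CS0)--(CS7), noting that two of the mixed cases must be combined with (CS1) and (CS5) respectively to isolate (CS6) and (CS7). The paper itself does not write out this computation; it instead obtains the proposition as a special case of \cite[Theorem~2.3]{am-2013c}, so your argument supplies the elementary proof the paper only alludes to.
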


From now on a crossed system of $\mathfrak{L}$ by $\mathfrak{g}$
will be viewed as a system of bilinear maps $\Lambda
(\mathfrak{L}, \, \mathfrak{g}) = \bigl(\triangleleft, \,
\triangleright, \, f, \, [-, \, -]_{\mathfrak{g}} \bigl)$
satisfying the axioms (CS0)-(CS7). We also use the following
convention during the paper: if one of the maps $\triangleleft$,
$\triangleright$, $f$, $[-, \, -]_{\mathfrak{g}}$ of a crossed
system is trivial then we will omit it from the system $\Lambda
(\mathfrak{L}, \, \mathfrak{g}) = \bigl(\triangleleft, \,
\triangleright, \, f, \, [-, \, -]_{\mathfrak{g}} \bigl)$. The set
${\mathcal C} {\mathcal S} (\mathfrak{L}, \, \mathfrak{g})$ of all
crossed systems of $\mathfrak{L}$ by $\mathfrak{g}$ is non-empty:
it contains the crossed system all maps of which are trivial.
The crossed product associated to it is just the direct
product $\mathfrak{g}_0 \times \mathfrak{L}$, between the abelian
Leibniz algebra $\mathfrak{g}_0$ with
$\mathfrak{L}$.

\begin{remark}
The axioms (CS0)-(CS7) are in fact quite natural although they look rather complicated at first sight
and can be interpreted as follows:
(CS3) is a non-abelian $2$-cocycle condition \cite{LoP}. Now, for any $x \in \mathfrak{L}$ we
denote by $\Delta_x = \, - \,  \triangleleft \, x$ (resp. $D_x = x \,
\triangleright \, - \, $) the linear maps from $\mathfrak{g}$ to
$\mathfrak{g}$ defined by $\Delta_x (g) := g \triangleleft x$
(resp. $D_x (g) := x \triangleright g$), for all $g \in
\mathfrak{g}$. The axiom (CS1) (resp. (CS4)) is equivalent to the fact that
$\Delta_x : \mathfrak{g} \to \mathfrak{g}$ (resp. $D_x : \mathfrak{g} \to \mathfrak{g}$) is a
derivation (resp. an anti-derivation) of the Leibniz algebra
$(\mathfrak{g}, \, [-, \, -]_{\mathfrak{g}})$, for any $x \in
\mathfrak{L}$. Finally, the axioms (CS2), (CS5), (CS6) and
respectively (CS7) are equivalent to the following commutating type
relations:
\begin{eqnarray*}
&& \Delta_y \circ \Delta_x - \Delta_x \circ \Delta_y = \Delta_{[x,
\, y]} + [-, \, f(x, \, y)]_{\mathfrak{g}} \eqlabel{comut1} \\
&& \Delta_y \circ D_x - D_x \circ \Delta_y = D_{[x, \, y]} +
[f(x, \, y), \, - ]_{\mathfrak{g}} \eqlabel{comut2} \\
&& \Delta_x (h) + D_x (h) \in {\rm Z}^r (\mathfrak{g}), \quad \, D_x \circ D_y + D_x \circ \Delta_y = 0 \eqlabel{comut3}
\end{eqnarray*}
for all $x$, $y \in \mathfrak{L}$ and $h \in \mathfrak{g}$, where
${\rm Z}^r (\mathfrak{g}) := \{ g \in \mathfrak{g} \, | \, [g, \,
z]_{\mathfrak{g}} = 0, \,\,\, \forall \,\,\, z \in \mathfrak{g}
\}$ is the right center of the Leibniz algebra $(\mathfrak{g}, \,
[-, \, -]_{\mathfrak{g}})$.
\end{remark}

There is more to be said about the axioms (CS0)-(CS7): they are non-abelian
generalizations of representations of a Leibniz algebra and
their cocycles associated \cite{LoP}. We recall their definition,
presented in an equivalent form such that the axioms \equref{bim1}-\equref{bim3} below
are exactly (CS2), (CS5) and (CS7) written for the trivial bracket $[-, \, -]_{\mathfrak{g}} =
0$ on $\mathfrak{g}$.

\begin{definition}\delabel{lbzbimodule}
Let $\mathfrak{L}$ be a Leibniz algebra. A \emph{representation}
of the Leibniz algebra $\mathfrak{L}$ or \emph{Leibniz bimodule
over $\mathfrak{L}$} is a triple $(\mathfrak{g}, \, \triangleleft,
\, \triangleright)$ consisting of a vector space $\mathfrak{g}$
and two bilinear maps $\triangleleft : \mathfrak{g} \times
\mathfrak{L} \to \mathfrak{g}$ and $\triangleright: \mathfrak{L}
\times \mathfrak{g} \to \mathfrak{g}$ satisfying the following
compatibility conditions for all $g \in\mathfrak{g}$ and $x$, $y
\in \mathfrak{L}$:
\begin{eqnarray}
g \triangleleft \left[x, \, y\right] &=& (g \triangleleft x)
\triangleleft y - (g \triangleleft y) \triangleleft x \eqlabel{bim1} \\
\left[x, \, y \right] \rhd g &=& x \rhd (y \rhd g) + (x \rhd g)
\triangleleft y  \eqlabel{bim2} \\
x \rhd (y \rhd g) &=& - \, x \rhd (g \triangleleft y)
\eqlabel{bim3}
\end{eqnarray}
We denote by ${}_{\mathfrak{L}}{\mathcal M}_{\mathfrak{L}}$ the
category of all Leibniz bimodules over $\mathfrak{L}$ with the
obvious maps, i.e. linear maps which are compatible with the left
and the right action of $\mathfrak{L}$.
\end{definition}

\begin{remark} \relabel{repLPvseu}
It is easy to see that the above axioms
\equref{bim1}-\equref{bim3} are equivalent to the compatibility conditions denoted
by (MLL), (LML) and (LLM) in \cite[Section 1.5]{LoP} that define a
representation of a Leibniz algebra $\mathfrak{L}$. Leibniz bimodules over
$\mathfrak{L}$ appear naturally in the description of crossed systems
of $\mathfrak{L}$ by a given abelian Leibniz algebra $\mathfrak{g}_0 = (\mathfrak{g}, \,
[-, \, -]_{\mathfrak{g}}:= 0)$. Indeed, a pre-crossed datum of the form
$\bigl(\triangleleft, \, \triangleright, \, f, \, [-, \, -]_{\mathfrak{g}} :=
0 \bigl)$ is a crossed system of if and only if $(\mathfrak{g}, \,
\triangleleft, \, \triangleright) \in {}_{\mathfrak{L}}{\mathcal
M}_{\mathfrak{L}}$ is a Leibniz bimodule over $\mathfrak{L}$ and
$f: \mathfrak{L} \times \mathfrak{L} \to \mathfrak{g}$ is a
$2$-cocyle, i.e. the compatibility condition (CS3) holds.
From now on, we denote by ${\mathcal C} {\mathcal S}_{0} (\mathfrak{L}, \, \mathfrak{g}_0)$ the set of all
\emph{abelian local crossed systems} of $\mathfrak{L}$ by
$\mathfrak{g}_0$, i.e. the set of all triples $\bigl(\triangleleft, \, \triangleright, \, f \bigl)$
such that $\bigl(\triangleleft, \, \triangleright, \, f, \, [-, \, -]_{\mathfrak{g}} :=
0 \bigl)$ is a crossed system.
\end{remark}

\section{The global extension problem for Leibniz algebras}\selabel{globalex}
In this section we shall give the theoretical answer to the
GE-problem by explicitly constructing a
global non-abelian cohomological type object which will
parameterize the classifying set ${\rm Gext} \, (\mathfrak{L},
\, E)$, for a given Leibniz algebra $\mathfrak{L}$, a vector space
$E$ and an epimorphism $\pi: E \to \mathfrak{L}$ of vector spaces
with ${\rm Ker} (\pi) = \mathfrak{g}$.

Let $\Lambda (\mathfrak{L}, \, \mathfrak{g}) =
\bigl(\triangleleft, \, \triangleright, \, f, \, [-, \,
-]_{\mathfrak{g}} \bigl)$ be a crossed system of $\mathfrak{L}$ by
$\mathfrak{g}$ and $\mathfrak{g} \, \# \, \mathfrak{L}$ the
associated crossed product. Then the canonical projection
$\pi_\mathfrak{L} : \mathfrak{g} \, \# \, \mathfrak{L} \to
\mathfrak{L}$, $\pi_\mathfrak{L} (g, x) = x$ is a morphism of
Leibniz algebras and ${\rm Ker} (\pi_{\mathfrak{L}}) =
\mathfrak{g} \times 0 \cong \mathfrak{g}$. In particular,
$\mathfrak{g} \, \# \, \mathfrak{L}$ is an extension of the
Leibniz algebra $\mathfrak{L}$ by the Leibniz algebra
$\mathfrak{g} = (\mathfrak{g}, [-,\, -]_{\mathfrak{g}})$ via:
\begin{eqnarray} \eqlabel{extencrosl}
\xymatrix{ 0 \ar[r] & \mathfrak{g} \ar[r]^{i_{\mathfrak{g}}} &
{\mathfrak{g} \, \# \, \mathfrak{L}} \ar[r]^{\pi_{\mathfrak{L}}} &
\mathfrak{L} \ar[r] & 0 }
\end{eqnarray}
where $i_{\mathfrak{g}} (g) = (g, 0)$, for all $g \in
\mathfrak{g}$. Conversely, the crossed product is the tool to
answer the description part of the GE-problem:

\begin{theorem}\thlabel{extensioliecros}
Let $\mathfrak{L}$ be a Leibniz algebra, $E$ a vector space and
$\pi : E \to \mathfrak{L}$ an epimorphism of vector spaces with
$\mathfrak{g} = {\rm Ker} (\pi)$. Let $[-, \, -]_E$ be a Leibniz
algebra structure on $E$ such that $\pi : (E, [-, \, -]_E) \to
\mathfrak{L}$ is a morphism of Leibniz algebras.

Then there exists a crossed system $\Lambda (\mathfrak{L}, \,
\mathfrak{g}) = \bigl(\triangleleft, \, \triangleright, \, f, \,
[-, \, -]_{\mathfrak{g}} \bigl)$ of $\mathfrak{L}$ by
$\mathfrak{g}$ and an isomorphism of Leibniz algebras $\varphi:
\mathfrak{g} \, \# \, \mathfrak{L} \to (E, [-, \, -]_E)$ that
stabilizes $\mathfrak{g}$ and co-stabilizes $\mathfrak{L}$, i.e.
the following diagram
\begin{eqnarray*} \eqlabel{diagramadoi}
\xymatrix {& \mathfrak{g} \ar[r]^{i_{\mathfrak{g}}} \ar[d]_{Id} &
{\mathfrak{g} \, \# \, \mathfrak{L}}
\ar[r]^{\pi_{\mathfrak{L}}} \ar[d]^{\varphi} & \mathfrak{L} \ar[d]^{Id}\\
& \mathfrak{g} \ar[r]^{i} & {E} \ar[r]^{\pi } & \mathfrak{L} }
\end{eqnarray*}
is commutative. In particular, any Leibniz algebra extension of
$\mathfrak{L}$ by $\mathfrak{g}$ is cohomologous to a crossed
product extension of the form \equref{extencrosl}.
\end{theorem}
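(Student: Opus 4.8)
The plan is to exploit the vector space splitting induced by a linear section of $\pi$: this lets me read off the four structure maps of a candidate crossed system directly, after which a single bracket computation shows that the resulting map is an isomorphism of Leibniz algebras, and the crossed-system axioms then come for free.

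First I would fix a $k$-linear section $s : \mathfrak{L} \to E$ of $\pi$, i.e. a linear map with $\pi \circ s = \Id$, which exists because $\pi$ is an epimorphism of vector spaces. Writing $i : \mathfrak{g} \to E$ for the inclusion of $\mathfrak{g} = {\rm Ker}(\pi)$, the assignment $\varphi : \mathfrak{g} \times \mathfrak{L} \to E$, $\varphi(g, x) := i(g) + s(x)$, is then a linear isomorphism, since $E = \mathfrak{g} \oplus s(\mathfrak{L})$. Next I would define the four bilinear maps. The crucial observation, used throughout, is that since $\pi$ is a morphism of Leibniz algebras with kernel $\mathfrak{g}$, the bracket in $E$ of any two elements one of which lies in $\mathfrak{g}$ again lands in $\mathfrak{g}$, and moreover $[s(x), s(y)]_E$ and $s([x,y])$ have the same image $[x,y]$ under $\pi$. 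Concretely, for $g, h \in \mathfrak{g}$ and $x, y \in \mathfrak{L}$ I set $[g,h]_{\mathfrak{g}} := [g,h]_E$, $\; x \triangleright h := [s(x), h]_E$, $\; g \triangleleft y := [g, s(y)]_E$ and $f(x,y) := [s(x), s(y)]_E - s([x,y])$; all four outputs lie in $\mathfrak{g}$ by the observation above, and bilinearity is immediate from bilinearity of $[-, \, -]_E$ and linearity of $s$.

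The heart of the argument is then a single computation. Expanding $[\varphi(g,x), \varphi(h,y)]_E = [i(g) + s(x), i(h) + s(y)]_E$ by bilinearity into its four terms and substituting the definitions gives precisely $i\bigl([g,h]_{\mathfrak{g}} + x \triangleright h + g \triangleleft y + f(x,y)\bigr) + s([x,y])$, which is exactly $\varphi$ applied to the crossed product bracket \equref{brackcrosspr}. Hence $\varphi$ is a bracket-preserving linear isomorphism, and this has two simultaneous consequences: transporting $[-, \, -]_E$ along $\varphi^{-1}$ shows that the bracket \equref{brackcrosspr} satisfies the Leibniz law, so by \deref{crosslbz} and \prref{axiocross} the system $\Lambda(\mathfrak{L}, \mathfrak{g}) = \bigl(\triangleleft, \triangleright, f, [-,-]_{\mathfrak{g}}\bigr)$ is a crossed system, with no need to verify (CS0)--(CS7) by hand; and $\varphi : \mathfrak{g} \, \# \, \mathfrak{L} \to (E, [-, \, -]_E)$ is thereby an isomorphism of Leibniz algebras. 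Finally, $\varphi(g,0) = i(g)$ and $\pi(\varphi(g,x)) = x$ show that $\varphi$ stabilizes $\mathfrak{g}$ and co-stabilizes $\mathfrak{L}$, so the required diagram commutes; the closing \emph{in particular} is then just the reformulation for extensions.

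I expect no serious obstacle here. The only point requiring genuine care is the uniform check that each of the four structure maps takes values in $\mathfrak{g}$, which rests entirely on $\pi$ being a Leibniz morphism with kernel $\mathfrak{g}$. The conceptually pleasant feature of this route is that the crossed-system axioms are obtained automatically from the isomorphism, rather than being verified one by one.
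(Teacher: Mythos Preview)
Your proposal is correct and follows essentially the same route as the paper: pick a linear section $s$, define the four structure maps exactly as you do, set $\varphi(g,x)=g+s(x)$, and then avoid verifying (CS0)--(CS7) directly by transporting the Leibniz structure along the linear isomorphism $\varphi$ and checking that the transported bracket coincides with \equref{brackcrosspr}. The paper phrases the key computation as evaluating $\varphi^{-1}\bigl([\varphi(g,x),\varphi(h,y)]_E\bigr)$ (and records the explicit inverse $\varphi^{-1}(y)=(y-s(\pi(y)),\pi(y))$), but this is the same computation you perform from the other side.
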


\begin{proof}
Let $[-, \, -]_E$ be a Leibniz algebra structure on $E$ such that
$\pi : (E, [-, \, -]_E) \to \mathfrak{L}$ is a morphism of Leibniz
algebras. Since $k$ is a field we can pick a $k$-linear section $s
: \mathfrak{L} \to E$ of $\pi$, i.e. $\pi \circ s = {\rm
Id}_{\mathfrak{L}}$. Using $s$ and $[-, \, -]_E$ we define a
pre-crossed datum $\bigl(\triangleleft_s, \, \triangleright_s, \,
f_s, \, [-, \, -]_{\mathfrak{g}} \bigl)$ of $\mathfrak{L}$ by
$\mathfrak{g}$ as follows:
\begin{eqnarray}
&& \triangleleft = \triangleleft_s :  \mathfrak{g} \times
\mathfrak{L} \to \mathfrak{g}, \quad \,\,\,\,\,\,\, g
\triangleleft x :
= [g, \, s(x)]_E \eqlabel{actiunea1}\\
&& \triangleright = \triangleright_{s} \,\,\, : \mathfrak{L}
\times \mathfrak{g}  \to \mathfrak{g}, \,\,\,\,\,\,\,\,\,\, x
\triangleright g := [s(x), \ g]_E \eqlabel{actiunea2}\\
&& f = f_s \,\,\, : \mathfrak{L}  \times \mathfrak{L}  \to
\mathfrak{g}, \,\,\,\, f (x, y) := [s(x),
\,  s(y)]_E - s([x, \, y])  \eqlabel{cocicluls} \\
&& \left[-, -\right]_{\mathfrak{g} } : \mathfrak{g}  \times
\mathfrak{g} \to \mathfrak{g}, \quad \,\, \left[g, \,
h\right]_{\mathfrak{g}} := [g, \, h]_E
\end{eqnarray}
for all $g$, $h \in \mathfrak{g}$ and $x$, $y \in \mathfrak{L}$.
We observe that the above bilinear maps are well-defined since
$\pi : (E, [-, \, -]_E) \to \mathfrak{L}$ is a morphism of Leibniz
algebras and $s$ is a section of $\pi$. We will prove that $\bigl(\triangleleft_s, \,
\triangleright_s, \, f_s, \, [-, \, -]_{\mathfrak{g}} \bigl)$ is a
crossed system of $\mathfrak{L}$ by $\mathfrak{g}$ and the map
$$
\varphi : \mathfrak{g} \, \# \, \mathfrak{L} \to (E, [-, -]_E),
\qquad \varphi (g, \, x) := g + s(x)
$$
is an isomorphism of Leibniz algebras, that stabilizes
$\mathfrak{g}$ and co-stabilizes $\mathfrak{L}$. Instead of
proving the compatibility conditions (CS0)-(CS7), which require a
very long computation, we use the following trick: $\varphi:
\mathfrak{g} \times \mathfrak{L} \to E$ is a linear isomorphism
between the direct product of vector spaces $\mathfrak{g} \times
\mathfrak{L}$ and the Leibniz algebra $(E, [-,\, -]_E)$ with the
inverse given by $\varphi^{-1}(y) := \bigl(y - s(\pi(y)), \,
\pi(y) \bigl)$, for all $y \in E$. Thus, there exists a unique
Leibniz algebra structure on $\mathfrak{g} \times \mathfrak{L}$
such that $\varphi$ is an isomorphism of Leibniz algebras and this
unique bracket on $\mathfrak{g} \times \mathfrak{L}$ is given by
$[(g, x), \,  (h, y)] := \varphi^{-1} \bigl( [\varphi(g, x), \,
\varphi(h, y)]_E \bigl)$, for all $g$, $h \in \mathfrak{g}$ and
$x$, $y\in \mathfrak{L}$. The proof is finished if we prove that
this bracket is the one defined by \equref{brackcrosspr}
associated to the pre-crossed system $\bigl(\triangleleft_s, \,
\triangleright_s, \, f_s, \, [-, \, -]_{\mathfrak{g}} \bigl)$.
Indeed, for any $g$, $h \in \mathfrak{g}$ and $x$, $y\in
\mathfrak{L}$ we have:
\begin{eqnarray*}
[(g, x), \,  (h, y)] &=& \varphi^{-1} \bigl( \, [\varphi(g, x), \,
\varphi(h, y)]_E \, \bigl) \, = \, \varphi^{-1} \bigl( \, [g + s(x), \, h + s(y) ]_E  \, \bigl) \\
&=& \varphi^{-1} \bigl( \, [g, \, h]_E + [g, \, s(y)]_E + [s(x),
\, h]_E + [s(x), \, s(y)]_E \, \bigl) \\
&=& \bigl(\, [g, \, h]_E + [g, \, s(y)]_E + [s(x), \,
h]_E + [s(x), \, s(y)]_E  - s([x, \, y]), \,\, [x, \, y] \, \bigl) \\
&=& \bigl(\, [g, \, h]_{\mathfrak{g}} + x \triangleright h + g
\triangleleft y + f(x, y), \, [x, \, y ] \, \bigl)
\end{eqnarray*}
as needed. The fact that $\varphi$ stabilizes $\mathfrak{g}$ and
co-stabilizes $\mathfrak{L}$ is straightforward.
\end{proof}

\thref{extensioliecros} shows that the answer to the classification
part of the GE-problem can be reduced to the classification of all
crossed products $\mathfrak{g} \# \mathfrak{L}$ associated to all
crossed systems between $\mathfrak{L}$ and $\mathfrak{g}$.
First we need the following technical result:

\begin{lemma}\lelabel{HHH}
Let $\mathfrak{L}$ be a Leibniz algebra, $\Lambda (\mathfrak{L},
\, \mathfrak{g}) = \bigl(\triangleleft, \, \triangleright, \, f,
\, [-, \, -]_{\mathfrak{g}} \bigl)$ and $\Lambda '(\mathfrak{L},
\, \mathfrak{g}) = \bigl(\triangleleft', \, \triangleright', \,
f', \, [-, \, -]^{'}_{\mathfrak{g}} \bigl)$ two crossed systems of
$\mathfrak{L}$ by $\mathfrak{g}$ and $\mathfrak{g} \#
\mathfrak{L}$, respectively  $\mathfrak{g} \# '\mathfrak{L}$ the
corresponding crossed products. Then there exists a bijection
between the set of all morphisms of Leibniz algebras $\psi:
\mathfrak{g} \# \mathfrak{L} \to \mathfrak{g} \# '\mathfrak{L}$
which stabilize $\mathfrak{g}$ and co-stabilize $\mathfrak{L}$ and
the set of all linear maps $r: \mathfrak{L} \to \mathfrak{g}$
satisfying the following compatibilities for all $g$, $h \in
\mathfrak{g}$, $x$, $y \in \mathfrak{L}$:
\begin{eqnarray}
[g, \, h]_{\mathfrak{g}} &=& [g, \, h]_{\mathfrak{g}}'
\eqlabel{Mor1}\\
g \triangleleft x &=& g \triangleleft ' x + [g, \,
r(x)]_{\mathfrak{g}}' \eqlabel{Mor2}\\
x\triangleright g &=& x \triangleright' g + [ r(x), \,
g]_{\mathfrak{g}}' \eqlabel{Mor3}\\
f (x, \, y) &=& f' (x, \, y) + [ r(x), \, r(y) ]_{\mathfrak{g}}' -
r ([x, \, y]) + x \triangleright' r(y) + r(x) \triangleleft' y
\eqlabel{Mor4}
\end{eqnarray}
Under the above bijection the morphism of Leibniz algebras $\psi =
\psi_{r}: \mathfrak{g} \# \mathfrak{L} \to \mathfrak{g} \#'
\mathfrak{L}$ corresponding to $r: \mathfrak{L} \to \mathfrak{g}$
is given by $ \psi(g, x) = (g + r(x), \, x)$, for all $g \in
\mathfrak{g}$ and $x \in \mathfrak{L}$. Moreover, $\psi =
\psi_{r}$ is an isomorphism with the inverse $\psi^{-1}_{r} =
\psi_{-r}$.
\end{lemma}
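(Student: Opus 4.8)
The plan is to reduce the statement to a bookkeeping computation by starting from the prescribed form of the map and extracting the compatibility conditions from the single requirement that it be a morphism of Leibniz algebras. The key observation is that any linear map $\psi : \mathfrak{g} \# \mathfrak{L} \to \mathfrak{g} \#' \mathfrak{L}$ that stabilizes $\mathfrak{g}$ and co-stabilizes $\mathfrak{L}$ is forced, by the commutativity of diagram~\equref{diagrama}, to have the shape $\psi(g, x) = (g + r(x), \, x)$ for a uniquely determined linear map $r : \mathfrak{L} \to \mathfrak{g}$. Indeed, co-stabilizing $\mathfrak{L}$ means $\pi_{\mathfrak{L}} \circ \psi = \pi_{\mathfrak{L}}$, which fixes the second component to be $x$; stabilizing $\mathfrak{g}$ means $\psi(g, 0) = (g, 0)$, which forces the first component to be $g + r(x)$ where $r(x) := {\rm pr}_{\mathfrak{g}}\bigl(\psi(0, x)\bigr)$. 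So the correspondence $\psi \leftrightarrow r$ is a bijection at the level of linear maps of the required diagrammatic type, and the entire content of the lemma is to determine \emph{which} such $r$ give rise to a \emph{morphism of Leibniz algebras}.

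Next I would impose the morphism condition $\psi\bigl(\{(g,x),\,(h,y)\}\bigr) = \{\psi(g,x),\, \psi(h,y)\}'$ and expand both sides using the crossed-product bracket~\equref{brackcrosspr}. On the left, applying $\psi$ to the bracket computed with $\Lambda$ gives
\begin{eqnarray*}
\bigl(\, [g,h]_{\mathfrak{g}} + x \triangleright h + g \triangleleft y + f(x,y) + r([x,y]), \,\, [x,y] \,\bigl).
\end{eqnarray*}
On the right, computing the bracket of $\psi(g,x) = (g+r(x), x)$ and $\psi(h,y) = (h+r(y), y)$ with the primed structure $\Lambda'$ yields
\begin{eqnarray*}
\bigl(\, [g+r(x),\, h+r(y)]_{\mathfrak{g}}' + x \triangleright' (h+r(y)) + (g+r(x)) \triangleleft' y + f'(x,y), \,\, [x,y] \,\bigl).
\end{eqnarray*}
The second components agree automatically. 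Equating the first components and then separating the resulting identity according to whether the arguments lie in $\mathfrak{g}$ or are of the form $r(\cdot)$ produces exactly the four conditions~\equref{Mor1}--\equref{Mor4}: setting $x = y = 0$ isolates~\equref{Mor1}; setting $h = 0$, $x = 0$ isolates~\equref{Mor2}; setting $g = 0$, $y = 0$ isolates~\equref{Mor3}; and setting $g = h = 0$ isolates~\equref{Mor4}. Conversely, granting~\equref{Mor1}--\equref{Mor4}, one reassembles the two first components and checks they coincide for arbitrary $g, h, x, y$, so the morphism condition holds.

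Finally I would record the inverse. Since $\psi_r(g,x) = (g + r(x), x)$, the composite $\psi_{-r} \circ \psi_r$ sends $(g,x)$ to $(g + r(x) - r(x), x) = (g,x)$, and likewise $\psi_r \circ \psi_{-r} = {\rm Id}$, so $\psi_r$ is bijective with $\psi_r^{-1} = \psi_{-r}$; this is consistent with the fact, already noted before \leref{HHH}, that ${\mathcal C}(E, \mathfrak{L})$ is a groupoid. \textbf{The main obstacle} is purely organizational rather than conceptual: the expansion of the right-hand first component mixes the bilinearity of $[-,-]_{\mathfrak{g}}'$, $\triangleleft'$ and $\triangleright'$ across the sums $g + r(x)$ and $h + r(y)$, so one must carefully collect the cross terms $[g, r(y)]_{\mathfrak{g}}'$, $[r(x), h]_{\mathfrak{g}}'$, $x \triangleright' r(y)$ and $r(x) \triangleleft' y$ and match them against the correct condition, making sure no term is double-counted when the four specializations are recombined into the general identity. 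No nontrivial use of the axioms (CS0)--(CS7) is needed here; those are only invoked implicitly through the standing assumption that $\Lambda$ and $\Lambda'$ are genuine crossed systems, so that both brackets are Leibniz brackets and $\psi_r$ automatically respects the Leibniz law once the bracket identity above is verified.
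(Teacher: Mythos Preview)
Your proposal is correct and follows essentially the same approach as the paper's proof: first pin down the form $\psi(g,x)=(g+r(x),x)$ from the stabilizing/co-stabilizing conditions, then test the morphism identity on the generators $(g,0)$ and $(0,x)$ (your four specializations $x=y=0$, $h=x=0$, $g=y=0$, $g=h=0$ are exactly those generator pairings), extracting \equref{Mor1}--\equref{Mor4}. The only difference is cosmetic: you expand the full bracket identity first and then specialize, whereas the paper goes directly to the generator checks; your write-up is in fact more explicit than the paper's, which leaves the intermediate cases to the reader.
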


\begin{proof} A linear map $\psi: \mathfrak{g} \# \mathfrak{L} \to \mathfrak{g} \#'
\mathfrak{L}$ stabilizes $\mathfrak{g}$ and co-stabilizes
$\mathfrak{L}$ if and only if there exists a uniquely determined
linear map $r: \mathfrak{L} \to \mathfrak{g}$ such that $ \psi(g,
x) = (g + r(x), \, x)$, for all $g \in \mathfrak{g}$ and $x \in
\mathfrak{L}$. Let $\psi = \psi_{r}$ be such a linear map. By a
straightforward computation we can prove that $\psi: \mathfrak{g}
\# \mathfrak{L} \to \mathfrak{g} \#' \mathfrak{L}$ is a morphism
of Leibniz algebras if and only if \equref{Mor1}-\equref{Mor4}
hold. For this it is enough to check the compatibility conditions
$\psi \Bigl( \{ (g, x), \, (h, y) \} \Bigl) = \{ \psi(g, \, x), \,
\psi(h, \, y) \}'$ on the set of generators, i.e. for the set
$\{(g, \, 0) ~|~ g \in \mathfrak{g}\} \cup \{(0, \, x) ~|~ x \in
\mathfrak{L} \}$. For, instance we can see that $\psi \Bigl( \{
(g, 0), \, (h, 0) \} \Bigl) = \{ \psi(g, \, 0), \, \psi(h, \, 0)
\}'$ if and only if $[-, \, -]_{\mathfrak{g}} = [-, \,
-]^{'}_{\mathfrak{g}}$, i.e. \equref{Mor1} holds. In the same way
$\psi \Bigl( \{ (0, x), \, (0, y) \} \Bigl) = \{ \psi(0, \, x), \,
\psi(0, \, y) \}'$ if and only if \equref{Mor4} holds. The details
are left to the reader, being straightforward computations.
\end{proof}

\leref{HHH} leads to the following:

\begin{definition}\delabel{echiaa}
Let $\mathfrak{L}$ be a Leibniz algebra and $\mathfrak{g}$ a
vector space. Two crossed systems $\Lambda (\mathfrak{L}, \,
\mathfrak{g}) = \bigl(\triangleleft, \, \triangleright, \, f, \,
[-, \, -]_{\mathfrak{g}} \bigl)$ and $\Lambda '(\mathfrak{L}, \,
\mathfrak{g}) = \bigl(\triangleleft', \, \triangleright', \, f',
\, [-, \, -]^{'}_{\mathfrak{g}} \bigl)$ of $\mathfrak{L}$ by
$\mathfrak{g}$ are called \emph{cohomologous}, and we denote this
by $\Lambda (\mathfrak{L}, \, \mathfrak{g}) \approx \Lambda'
(\mathfrak{L}, \, \mathfrak{g})$, if and only if $[-, \,
-]_{\mathfrak{g}} = [-, \, -]^{'}_{\mathfrak{g}}$ and there exists
a linear map $r: \mathfrak{L} \to \mathfrak{g}$ such that for any
$g \in \mathfrak{g}$ and $x$, $y \in \mathfrak{L}$ we have:
\begin{eqnarray}
g \triangleleft x &=& g \triangleleft ' x + [g, \,
r(x)]_{\mathfrak{g}} \eqlabel{Mor2a}\\
x\triangleright g &=& x \triangleright' g + [ r(x), \,
g]_{\mathfrak{g}} \eqlabel{Mor3a}\\
f (x, \, y) &=& f' (x, \, y) + [ r(x), \, r(y) ]_{\mathfrak{g}} -
r ([x, \, y]) + x \triangleright' r(y) + r(x) \triangleleft' y
\eqlabel{Mor4a}
\end{eqnarray}
\end{definition}

\begin{example}\exlabel{coboundary}
Let $\Lambda (\mathfrak{L}, \, \mathfrak{g}) =
\bigl(\triangleleft, \, \triangleright, \, f, \, [-, \,
-]_{\mathfrak{g}} \bigl)$ be a pre-crossed datum of $\mathfrak{L}$
by $\mathfrak{g}$ such that $\triangleleft$, $\triangleright$, $f$
are all the trivial maps. Then, $\Lambda (\mathfrak{L}, \,
\mathfrak{g}) = \bigl([-, \, -]_{\mathfrak{g}} \bigl)$ is a
crossed system if and only if $(\mathfrak{g}, \, [-, \,
-]_{\mathfrak{g}})$ is a Leibniz algebra. This type of crossed
system will be called a trivial crossed system and the
crossed product associated to it is just $\mathfrak{g} \times
\mathfrak{L}$, the direct product of Leibniz algebras. An
arbitrary crossed system $\Lambda (\mathfrak{L}, \, \mathfrak{g})
= \bigl(\triangleleft, \, \triangleright, \, f, \, [-, \,
-]_{\mathfrak{g}} \bigl)$ is called a \emph{coboundary} if it is
cohomologous with a trivial crossed system. Thus, $\Lambda
(\mathfrak{L}, \, \mathfrak{g}) = \bigl(\triangleleft, \,
\triangleright, \, f, \, [-, \, -]_{\mathfrak{g}} \bigl)$ is a
coboundary if there exists a linear map $r: \mathfrak{L} \to
\mathfrak{g}$ such that $(\triangleleft, \, \triangleright, \, f)$
are implemented by $r$ via the following formulas: $\triangleleft
:= [- , \, r(-)]_{\mathfrak{g}}$, $\triangleright := [ r(-), \,
-]_{\mathfrak{g}}$ and $f (x, \, y) = [ r(x), \, r(y)
]_{\mathfrak{g}} - r ([x, \, y])$, for all $x$, $y \in
\mathfrak{L}$. Any crossed product
$\mathfrak{g} \# \mathfrak{L}$ associated to a coboundary is
isomorphic to the usual direct product $\mathfrak{g} \times
\mathfrak{L}$ of Leibniz algebras.
\end{example}

As a conclusion of the above results the theoretical answer of the
GE-problem follows:

\begin{theorem}\thlabel{main1222}
Let $\mathfrak{L}$ be a Leibniz algebra, $E$ a vector space and
$\pi : E \to \mathfrak{L}$ an epimorphism of vector spaces with
$\mathfrak{g} = {\rm Ker} (\pi)$. Then $\approx$ is an equivalence
relation on the set ${\mathcal C} {\mathcal S} (\mathfrak{L}, \,
\mathfrak{g})$ of all crossed systems of $\mathfrak{L}$ by
$\mathfrak{g}$. If we denote by ${\mathbb G} {\mathbb H} {\mathbb
L}^{2} \, (\mathfrak{L}, \, \mathfrak{g}) := {\mathcal C}
{\mathcal S} (\mathfrak{L}, \, \mathfrak{g})/ \approx $, then the
map
$$
{\mathbb G} {\mathbb H} {\mathbb L}^{2} \, (\mathfrak{L}, \,
\mathfrak{g}) \to {\rm Gext} \, (E, \mathfrak{L}), \,\,\,
\overline{\bigl(\triangleleft, \, \triangleright, \, f, \, [-, \,
-]_{\mathfrak{g}} \bigl)} \, \longmapsto \, \mathfrak{g} \,
\#_{\bigl(\triangleleft, \, \triangleright, \, f, \, [-, \,
-]_{\mathfrak{g}} \bigl)} \, \mathfrak{L}
$$
is a bijection between ${\mathbb G} {\mathbb H} {\mathbb L}^{2} \,
(\mathfrak{L}, \, \mathfrak{g})$ and ${\rm Gext} (E,
\mathfrak{L})$.
\end{theorem}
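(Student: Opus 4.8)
The plan is to deduce everything from \leref{HHH} and \thref{extensioliecros}, so that no fresh computation with the axioms (CS0)--(CS7) is needed. First I would fix once and for all a linear section $s$ of $\pi$ and use it to identify $E$ with the vector space $\mathfrak{g} \times \mathfrak{L}$ and $\pi$ with the projection $\pi_{\mathfrak{L}}$; under this identification $\mathfrak{g} = {\rm Ker}(\pi)$ becomes $\mathfrak{g} \times 0$ and every crossed product $\mathfrak{g} \#_{\Lambda} \mathfrak{L}$ becomes an honest object of the groupoid $\mathcal{C}(E, \mathfrak{L})$, so that the proposed map is well-posed on representatives before passing to $\approx$-classes.

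To show $\approx$ is an equivalence relation on $\mathcal{C}\mathcal{S}(\mathfrak{L}, \mathfrak{g})$, I would read \leref{HHH} as asserting that $\Lambda \approx \Lambda'$ holds precisely when there is a morphism $\psi_r$ of $\mathcal{C}(E, \mathfrak{L})$ between the associated crossed products, the correspondence $r \leftrightarrow \psi_r$ being a bijection. Reflexivity then comes from $r = 0$, where $\psi_0 = {\rm Id}$; symmetry from the last assertion of \leref{HHH}, that $\psi_r$ is invertible with $\psi_r^{-1} = \psi_{-r}$, so that $-r$ witnesses $\Lambda' \approx \Lambda$; and transitivity from the elementary computation $\psi_{r'} \circ \psi_{r} = \psi_{r+r'}$, so that $r+r'$ witnesses the composite. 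Equivalently, one may simply invoke the fact recorded in \seref{prel} that $\mathcal{C}(E, \mathfrak{L})$ is a groupoid and observe that $\approx$ is the restriction to crossed-product objects of its connectedness relation.

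Next I would establish the bijection. Well-definedness of $\overline{\Lambda} \mapsto \mathfrak{g} \#_{\Lambda} \mathfrak{L}$ and injectivity are two sides of one coin: by \leref{HHH} together with \deref{echiaa}, the crossed products $\mathfrak{g} \#_{\Lambda} \mathfrak{L}$ and $\mathfrak{g} \#_{\Lambda'} \mathfrak{L}$ are connected by a morphism of $\mathcal{C}(E, \mathfrak{L})$ stabilizing $\mathfrak{g}$ and co-stabilizing $\mathfrak{L}$ if and only if $\Lambda \approx \Lambda'$; since membership in one class of ${\rm Gext}(E, \mathfrak{L})$ is by definition exactly such a connection, the map carries each $\approx$-class to a single ${\rm Gext}$-class and separates distinct ones. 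Surjectivity is then precisely the content of \thref{extensioliecros}: any Leibniz structure on $E$ for which $\pi$ is a morphism is cohomologous to some crossed product $\mathfrak{g} \#_{\Lambda} \mathfrak{L}$, hence lies in the image.

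I expect the only delicate point to be bookkeeping rather than mathematics. One must verify that condition \equref{Mor1}, namely $[-, \, -]_{\mathfrak{g}} = [-, \, -]^{'}_{\mathfrak{g}}$, is genuinely forced by the existence of a stabilizing, co-stabilizing morphism, so that $\approx$ never identifies crossed systems carrying different Leibniz brackets on $\mathfrak{g}$; this is what keeps the classifying object honest and foreshadows its decomposition in \coref{desccompcon}. Since all the hard verifications have already been absorbed into \leref{HHH} and \thref{extensioliecros}, the present argument is purely a matter of assembling those two results correctly under the fixed identification $E \cong \mathfrak{g} \times \mathfrak{L}$.
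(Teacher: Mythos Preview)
Your proposal is correct and follows exactly the approach of the paper, whose proof is the one-liner ``Follows from \thref{extensioliecros} and \leref{HHH}''; you have simply unpacked that sentence. The only addition worth noting is that you make explicit why $\approx$ is an equivalence relation (via $r=0$, $-r$, and $r+r'$), which the paper leaves implicit in the groupoid remark of \seref{prel}.
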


\begin{proof} Follows from \thref{extensioliecros} and
\leref{HHH}.
\end{proof}

The explicit computation of the classifying object ${\mathbb G}
{\mathbb H} {\mathbb L}^{2} \, (\mathfrak{L}, \, \mathfrak{g})$ is
a challenging and very difficult problem. The first key step in
decomposing this object is suggested by the compatibility
condition \equref{Mor1} of \leref{HHH}: it shows that two
different Leibniz algebra structures $[-, \, -]_{\mathfrak{g}}$
and $[-, \, -]^{'}_{\mathfrak{g}}$ on $\mathfrak{g}$ give two
different equivalence classes (orbits) of the relation $\approx$
on ${\mathcal C} {\mathcal S} (\mathfrak{L}, \, \mathfrak{g})$.
Let us fix $[-, \, -]_{\mathfrak{g}}$ a Leibniz algebra structure
on $\mathfrak{g}$ and denote by ${\mathcal C} {\mathcal S}_{[-, \,
-]_{\mathfrak{g}}} (\mathfrak{L}, \, \mathfrak{g})$ the set of all
\emph{local crossed systems} of the Leibniz algebra $\mathfrak{L}$
by the Leibniz algebra $(\mathfrak{g}, \, [-, \,
-]_{\mathfrak{g}})$, i.e. the set of all triples
$\bigl(\triangleleft, \, \triangleright, \, f \bigl)$ such that
$\bigl(\triangleleft, \, \triangleright, \, f, \, [-, \,
-]_{\mathfrak{g}} \bigl) \in {\mathcal C} {\mathcal S}
(\mathfrak{L}, \, \mathfrak{g})$. Two local crossed systems
$\bigl(\triangleleft, \, \triangleright, \, f \bigl)$ and
$\bigl(\triangleleft', \, \triangleright', \, f' \bigl)$ are
\emph{local cohomologous} and we denote this by
$\bigl(\triangleleft, \, \triangleright, \, f \bigl) \approx_l
\bigl(\triangleleft', \, \triangleright', \, f' \bigl)$ if there
exists a linear map $r: \mathfrak{L} \to \mathfrak{g}$ satisfying
the compatibility conditions \equref{Mor2a}-\equref{Mor4a}. Then
$\approx_l$ is an equivalent relation on the set ${\mathcal C}
{\mathcal S}_{[-, \, -]_{\mathfrak{g}}} (\mathfrak{L}, \,
\mathfrak{g})$ of all local crossed systems and we denote by
${\mathbb H} {\mathbb L}^{2} \, \bigl(\mathfrak{L}, \,
(\mathfrak{g}, \, [-, \, -]_{\mathfrak{g}} )\bigl)$ the quotient
set ${\mathcal C} {\mathcal S}_{[-, \, -]_{\mathfrak{g}}}
(\mathfrak{L}, \, \mathfrak{g})/ \approx_l$. The object ${\mathbb
H} {\mathbb L}^{2} \, \bigl(\mathfrak{L}, \, (\mathfrak{g}, \, [-,
\, -]_{\mathfrak{g}} )\bigl)$ was constructed in \cite[Corollary
4.2]{am-2013c} as a classifying object of all (non-abelian)
extensions of the Leibniz algebra $\mathfrak{L}$ by a fixed
Leibniz algebra $(\mathfrak{g}, \, [-, \, -]_{\mathfrak{g}})$. The
above considerations highlight the relation between the global
extension problem and the classical extension problem and
give the following decomposition of ${\mathbb G} {\mathbb H}
{\mathbb L}^{2} \, (\mathfrak{L}, \, \mathfrak{g})$.

\begin{corollary} \colabel{desccompcon}
Let $\mathfrak{L}$ be a Leibniz algebra, $E$ a vector space and
$\pi : E \to \mathfrak{L}$ an epimorphism of vector spaces with
$\mathfrak{g} = {\rm Ker} (\pi)$. Then
\begin{equation}\eqlabel{balsoi}
{\mathbb G} {\mathbb H} {\mathbb L}^{2} \, (\mathfrak{L}, \,
\mathfrak{g}) = \, \sqcup_{[-, \, -]_{\mathfrak{g}}} \, {\mathbb
H} {\mathbb L}^{2} \, \bigl(\mathfrak{L}, \, (\mathfrak{g}, \, [-,
\, -]_{\mathfrak{g}} )\bigl)
\end{equation}
where the coproduct on the right hand side is in the category
of sets over all possible Leibniz algebra structures $[-, \,
-]_{\mathfrak{g}}$ on the vector space $\mathfrak{g}$.
\end{corollary}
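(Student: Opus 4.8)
The plan is to deduce the decomposition directly from \thref{main1222} and \leref{HHH} by a purely set-theoretic argument: I will show that the relation $\approx$ on ${\mathcal C}{\mathcal S}(\mathfrak{L}, \mathfrak{g})$ is compatible with the partition of crossed systems by the Leibniz bracket they carry on $\mathfrak{g}$, so that passing to the quotient turns this partition into a coproduct of local quotients. First I would record the tautological decomposition of the set of all crossed systems. Since axiom (CS0) says that the fourth component $[-,-]_{\mathfrak{g}}$ of any crossed system is itself a Leibniz algebra structure on $\mathfrak{g}$, sorting each $\bigl(\triangleleft, \triangleright, f, [-,-]_{\mathfrak{g}}\bigr)$ according to that bracket gives
\begin{equation*}
{\mathcal C}{\mathcal S}(\mathfrak{L}, \mathfrak{g}) = \bigsqcup_{[-,-]_{\mathfrak{g}}} {\mathcal C}{\mathcal S}_{[-,-]_{\mathfrak{g}}}(\mathfrak{L}, \mathfrak{g}),
\end{equation*}
where the index ranges over all Leibniz algebra structures on the vector space $\mathfrak{g}$ and the block attached to $[-,-]_{\mathfrak{g}}$ is canonically the set of local crossed systems $\bigl(\triangleleft, \triangleright, f\bigr)$ over $(\mathfrak{g}, [-,-]_{\mathfrak{g}})$.

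The key step is the observation already flagged after \thref{main1222}, which rests on the compatibility condition \equref{Mor1} of \leref{HHH}: whenever two crossed systems $\Lambda(\mathfrak{L}, \mathfrak{g})$ and $\Lambda'(\mathfrak{L}, \mathfrak{g})$ are cohomologous in the sense of \deref{echiaa}, their brackets on $\mathfrak{g}$ must coincide, $[-,-]_{\mathfrak{g}} = [-,-]^{'}_{\mathfrak{g}}$. Hence $\approx$ relates no pair of elements lying in distinct blocks of the partition above; the relation is entirely ``block-diagonal''. Moreover, when $\approx$ is restricted to a single block with fixed bracket $[-,-]_{\mathfrak{g}}$, condition \equref{Mor1} is automatic and the remaining defining conditions \equref{Mor2a}--\equref{Mor4a} are, verbatim, the relations defining the local relation $\approx_l$ on ${\mathcal C}{\mathcal S}_{[-,-]_{\mathfrak{g}}}(\mathfrak{L}, \mathfrak{g})$. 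Thus $\approx$ restricts on each block to exactly $\approx_l$, and not to a coarser relation.

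Finally I would invoke the elementary fact that an equivalence relation on a disjoint union which identifies no two elements from different summands has quotient equal to the disjoint union of the quotients of the summands. Combining this with the two facts just established gives
\begin{equation*}
{\mathbb G}{\mathbb H}{\mathbb L}^{2}(\mathfrak{L}, \mathfrak{g}) = {\mathcal C}{\mathcal S}(\mathfrak{L}, \mathfrak{g})/\approx \;=\; \bigsqcup_{[-,-]_{\mathfrak{g}}} \bigl({\mathcal C}{\mathcal S}_{[-,-]_{\mathfrak{g}}}(\mathfrak{L}, \mathfrak{g})/\approx_l\bigr) = \bigsqcup_{[-,-]_{\mathfrak{g}}} {\mathbb H}{\mathbb L}^{2}\bigl(\mathfrak{L}, (\mathfrak{g}, [-,-]_{\mathfrak{g}})\bigr),
\end{equation*}
which is precisely \equref{balsoi}. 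I do not expect any serious obstacle here: once \equref{Mor1} is in hand the argument is formal, and the only point demanding a little care is verifying that the restriction of the global relation to each block truly coincides with the local relation $\approx_l$ rather than with something coarser or finer. This verification is immediate, since within a fixed block the bracket condition \equref{Mor1} is satisfied identically and the surviving conditions \equref{Mor2a}--\equref{Mor4a} are literally those of \deref{echiaa} read with a single fixed bracket.
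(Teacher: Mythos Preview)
Your proposal is correct and follows essentially the same approach as the paper: the paper's argument, given in the paragraph immediately preceding the corollary, uses exactly the observation from \equref{Mor1} of \leref{HHH} that cohomologous crossed systems carry the same bracket on $\mathfrak{g}$, then fixes a bracket and identifies the restricted relation with $\approx_l$. Your write-up is simply a more explicit and carefully organized version of the same set-theoretic reasoning.
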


\subsection*{The abelian case. The computation of ${\mathbb H} {\mathbb L}^{2} \, \bigl(\mathfrak{L}, \,
\mathfrak{g}_0 \bigl)$. }

The formula \equref{balsoi} highlights the complexity of computing
${\mathbb G} {\mathbb H} {\mathbb L}^{2} \, (\mathfrak{L}, \,
\mathfrak{g})$. The first big obstacle is given by the description
of all Leibinz algebra structures on $\mathfrak{g} = {\rm Ker}
(\pi)$. In case that ${\rm dim}_k (\mathfrak{g})$ is large,
then the mission is hopeless. Furthermore, even for a fixed
Leibniz algebra structure $(\mathfrak{g}, [-, -]_{\mathfrak{g}})$,
the explicit computation of the component ${\mathbb H} {\mathbb
L}^{2} \, \bigl(\mathfrak{L}, \, (\mathfrak{g}, \, [-, \,
-]_{\mathfrak{g}} )\bigl)$ from the right hand side of
\equref{balsoi} is far from being an easy problem, since it classifies all extensions of $\mathfrak{L}$
by the Leibinz algebra $(\mathfrak{g}, [-, -]_{\mathfrak{g}})$.
Among all components of the coproduct of \equref{balsoi} the
simplest and best understood is the one corresponding to the
abelian Leibniz algebra $\mathfrak{g} := \mathfrak{g}_0$. We shall
denote by ${\mathbb H} {\mathbb L}^{2} \, \bigl(\mathfrak{L}, \,
\mathfrak{g}_0 \bigl)$ and we will prove that it is a coproduct of
all classical second cohomological groups \cite{LoP}.

In order to do this, we recall from \reref{repLPvseu} that we have denoted by
${\mathcal C} {\mathcal S}_{0} (\mathfrak{L}, \, \mathfrak{g}_0)$ the set of all
abelian local crossed systems of $\mathfrak{L}$ by
$\mathfrak{g}_0$, i.e. all triples $\bigl(\triangleleft, \, \triangleright, \, f \bigl)$
such that $\bigl(\triangleleft, \, \triangleright, \, f, \, [-, \, -]_{\mathfrak{g}} :=
0 \bigl)$ is a crossed system - which is equivalent to the fact that $(\mathfrak{g}, \,
\triangleleft, \, \triangleright) \in {}_{\mathfrak{L}}{\mathcal
M}_{\mathfrak{L}}$ is a Leibniz bimodule over $\mathfrak{L}$ and
$f: \mathfrak{L} \times \mathfrak{L} \to \mathfrak{g}$ is a
$2$-cocyle. Now, \deref{echiaa} for abelian local crossed
systems take the following form: two abelian local crossed systems
$\bigl(\triangleleft, \, \triangleright, \, f \bigl)$ and
$\bigl(\triangleleft', \, \triangleright', \, f' \bigl)$ are local
cohomologous and we denote this by $\bigl(\triangleleft, \,
\triangleright, \, f \bigl) \approx_{l, a} \bigl(\triangleleft',
\, \triangleright', \, f' \bigl)$ if and only if $\triangleleft =
\triangleleft'$, $\triangleright = \triangleright'$ and there
exists a linear map $r: \mathfrak{L} \to \mathfrak{g}$ such that
\begin{equation} \eqlabel{lpsiml}
f (x, \, y) = f' (x, \, y) - r ([x, \, y]) + x \triangleright'
r(y) + r(x) \triangleleft' y
\end{equation}
The equalities $\triangleleft = \triangleleft'$ and
$\triangleright = \triangleright'$ show that two different
Leibniz bimodule structures over $\mathfrak{L}$ give different
equivalent classes in the classifying object ${\mathbb H} {\mathbb
L}^{2} \, \bigl(\mathfrak{L}, \, \mathfrak{g}_0 \bigl)$. Thus, for
its computation we can also fix $(\mathfrak{g}, \, \triangleleft,
\, \triangleright)$ a Leibniz bimodule structure over
$\mathfrak{L}$ and consider for it the set $ZL^2_{(\triangleleft,
\, \triangleright)} \, (\mathfrak{L}, \, \mathfrak{g}_0) $ of all
$2$-cocycles: i.e. bilinear maps $f: \mathfrak{L} \times
\mathfrak{L} \to \mathfrak{g}$ satisfying the compatibility
condition (CS3). Two such cocycles $f$ and $f'$ are (local)
cohomologous and we denote this by $f \approx_{l, a} \, f'$ if there
exists a linear map $r: \mathfrak{L} \to \mathfrak{g}$ such that
\equref{lpsiml} holds. Now, $\approx_{l, a}$ is an equivalent
relation on the set $ZL^2_{(\triangleleft, \, \triangleright)} \,
(\mathfrak{L}, \, \mathfrak{g}_0) $ of all $2$-cocycles and the
quotient set $ZL^2_{(\triangleleft, \, \triangleright)} \,
(\mathfrak{L}, \, \mathfrak{g}_0)/  \approx_{l, a} $ is just the
second cohomological group \cite[Proposition
1.9]{LoP} and is denoted by $HL^2_{(\triangleleft, \,
\triangleright)} \, (\mathfrak{L}, \, \mathfrak{g}_0)$. All the
above considerations prove the following:

\begin{corollary}\colabel{cazuabspargere}
Let $\mathfrak{L}$ be a Leibniz algebra and $\mathfrak{g}$ a
vector space viewed with the abelian Leibniz algebra structure
$\mathfrak{g}_0$. Then:
\begin{equation}\eqlabel{balsoi2}
{\mathbb H} {\mathbb L}^{2} \, \bigl(\mathfrak{L}, \,
\mathfrak{g}_0 \bigl) \, = \, \sqcup_{(\triangleleft, \,
\triangleright)} \, HL^2_{(\triangleleft, \, \triangleright)} \,
(\mathfrak{L}, \, \mathfrak{g}_0)
\end{equation}
where the coproduct on the right hand side is in the category
of sets over all possible Leibniz bimodule structures
$(\triangleleft, \, \triangleright)$ on $\mathfrak{g}$.
\end{corollary}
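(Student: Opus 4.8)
The plan is to assemble \coref{cazuabspargere} directly from the chain of reductions established in the preceding discussion, treating it as a bookkeeping statement that packages two successive partitions into a single coproduct decomposition. The overall strategy mirrors the one used to prove \coref{desccompcon}: isolate a discrete invariant on the classifying set that is preserved by the equivalence relation, split the set according to that invariant, and identify each resulting block with a previously-understood object. Here the object to be decomposed is ${\mathbb H} {\mathbb L}^{2} \, \bigl(\mathfrak{L}, \, \mathfrak{g}_0 \bigl)$, the quotient ${\mathcal C} {\mathcal S}_{0} (\mathfrak{L}, \, \mathfrak{g}_0)/ \approx_{l, a}$, and the discrete invariant is the underlying Leibniz bimodule structure $(\triangleleft, \, \triangleright)$.

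First I would recall from \reref{repLPvseu} that an element of ${\mathcal C} {\mathcal S}_{0} (\mathfrak{L}, \, \mathfrak{g}_0)$ is precisely a triple $\bigl(\triangleleft, \, \triangleright, \, f \bigl)$ in which $(\mathfrak{g}, \, \triangleleft, \, \triangleright)$ is a Leibniz bimodule over $\mathfrak{L}$ and $f$ is a $2$-cocycle, i.e. satisfies (CS3). This already exhibits a set-theoretic partition of ${\mathcal C} {\mathcal S}_{0} (\mathfrak{L}, \, \mathfrak{g}_0)$ indexed by the bimodule structures $(\triangleleft, \, \triangleright)$, the block over a fixed pair being exactly $ZL^2_{(\triangleleft, \, \triangleright)} \, (\mathfrak{L}, \, \mathfrak{g}_0)$. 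Next I would invoke the explicit form of the equivalence $\approx_{l, a}$ recorded just before the corollary: two abelian local crossed systems are cohomologous if and only if $\triangleleft = \triangleleft'$, $\triangleright = \triangleright'$ \emph{and} the $2$-cocycles differ by the coboundary term in \equref{lpsiml}. The crucial observation is that the condition $\triangleleft = \triangleleft'$ and $\triangleright = \triangleright'$ is forced: the relation never identifies two systems sitting in different blocks of the partition. Consequently each equivalence class of $\approx_{l, a}$ lies entirely within a single block, so passing to the quotient commutes with the partition, and the classes inside the block over $(\triangleleft, \, \triangleright)$ are exactly the classes of $ZL^2_{(\triangleleft, \, \triangleright)} \, (\mathfrak{L}, \, \mathfrak{g}_0)$ under the restricted relation.

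Finally I would identify each such block of classes with the Loday--Pirashvili second cohomology group: once $(\triangleleft, \, \triangleright)$ is fixed, \equref{lpsiml} becomes the standard statement that $f$ and $f'$ differ by a coboundary $r \mapsto r([x,y]) - x \triangleright r(y) - r(x) \triangleleft y$, so $ZL^2_{(\triangleleft, \, \triangleright)} \, (\mathfrak{L}, \, \mathfrak{g}_0)/ \approx_{l, a} = HL^2_{(\triangleleft, \, \triangleright)} \, (\mathfrak{L}, \, \mathfrak{g}_0)$ by \cite[Proposition 1.9]{LoP}. Taking the disjoint union over all bimodule structures then yields \equref{balsoi2}. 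There is no genuine obstacle beyond verifying that the equivalence relation respects the invariant, which is immediate from the two identities $\triangleleft = \triangleleft'$ and $\triangleright = \triangleright'$ built into $\approx_{l, a}$; the remainder is the formal fact that a quotient of a disjoint union by a relation that never crosses blocks is the disjoint union of the quotients of the blocks. Thus the proof is essentially a citation of the preceding paragraph together with \cite[Proposition 1.9]{LoP}, exactly as the phrase \emph{``All the above considerations prove the following''} anticipates.
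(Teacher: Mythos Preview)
Your proposal is correct and follows essentially the same approach as the paper: the corollary is stated immediately after the phrase ``All the above considerations prove the following'', and those considerations are precisely the ones you reproduce --- the identification of abelian local crossed systems with pairs (bimodule structure, $2$-cocycle), the observation that $\approx_{l,a}$ forces $\triangleleft = \triangleleft'$ and $\triangleright = \triangleright'$, and the recognition of each fixed-bimodule block as $HL^2_{(\triangleleft,\triangleright)}(\mathfrak{L},\mathfrak{g}_0)$ via \cite[Proposition~1.9]{LoP}. There is nothing to add.
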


\subsection*{Metabelian Leibniz algebras. The computation of
${\mathbb H} {\mathbb L}^{2} \, \bigl(\mathfrak{L}_0, \,
\mathfrak{g}_0 \bigl)$. }

We recall that a metabelian group is a group that is an extension
of an abelian group by other abelian group. Having this as a
source of inspiration we introduce the following concept:

\begin{definition}\delabel{cotangent}
A Leibniz algebra is called \emph{metabelian} if it is an
extension of an abelian Leibniz algebra by another abelian Leibniz
algebra.
\end{definition}

Using \prref{axiocross} and \thref{extensioliecros} we obtain that
any metabelian Leibniz algebra is isomorphic to a crossed product
$\mathfrak{L}_0 \, \# \mathfrak{g}_0$, for some vector spaces
$\mathfrak{L}$ and $\mathfrak{g}$. Hence, $\mathfrak{L}_0 \, \#
\mathfrak{g}_0$ has the bracket given for any $g$, $h\in
\mathfrak{g}$, $x$, $y \in \mathfrak{L}$ by:
\begin{eqnarray}
\{ (g, x), \, (h, y) \} &:=& \bigl( x \triangleright h +
g\triangleleft y +  f(x, y), \, 0 \bigl) \eqlabel{cotg2}
\end{eqnarray}
for some bilinear maps
$$
\triangleleft : \mathfrak{g} \times \mathfrak{L} \to \mathfrak{g},
\quad \triangleright: \mathfrak{L} \times \mathfrak{g} \to
\mathfrak{g}, \quad f: \mathfrak{L} \times \mathfrak{L} \to
\mathfrak{g}
$$
satisfying the following compatibility conditions for any $g \in
\mathfrak{g}$, $x$, $y$, $z\in \mathfrak{L}$:
\begin{eqnarray}
(g \triangleleft x) \triangleleft y &=& (g \triangleleft y)
\triangleleft x, \quad  x \rhd (g \triangleleft y) = (x \rhd g)
\triangleleft y = - \, x \rhd (y \rhd g) \eqlabel{discret1} \\
x \rhd f(y, \, z) &=& f(x, \, y) \triangleleft z - f(x, \, z)
\triangleleft y \eqlabel{discret1}
\end{eqnarray}
which are the ones remaining from the axioms (CS0)-(CS7) in
this context. Now, \deref{echiaa} applied for the abelian Leibniz
algebras $\mathfrak{L}_0$ and $\mathfrak{g}_0$ are reduced to the
following: two triples $(\triangleleft, \, \triangleright, \, f)$
and $(\triangleleft', \, \triangleright', \, f')$ are cohomologous
and we denote this by $(\triangleleft, \, \triangleright, \, f)
\approx_{m}^a (\triangleleft', \, \triangleright', \, f')$ if and
only if $\triangleleft = \triangleleft'$, $\triangleright =
\triangleright'$ and there exists a linear map $r: \mathfrak{L}
\to \mathfrak{g}$ such that
\begin{equation} \eqlabel{metaab1}
f (x, \, y) = f' (x, \, y) + x \triangleright' r(y) + r(x)
\triangleleft' y
\end{equation}
for all $x$, $y \in \mathfrak{L}$. Thus, the equality
$\triangleleft = \triangleleft'$, $\triangleright =
\triangleright'$ of this equivalent relation shows that in order to
compute the local cohomological object ${\mathbb H} {\mathbb
L}^{2} \, \bigl(\mathfrak{L}_0, \, \mathfrak{g}_0 \bigl)$ we can
also fix a pair $(\triangleleft, \triangleright)$. This leads to
the following definition:

\begin{definition}\delabel{discreterep}
A \emph{discrete representation} of a vector space $\mathfrak{L}$
is a triple $(\mathfrak{g}, \, \triangleleft, \, \triangleright)$
consisting of a vector space $\mathfrak{g}$ and two bilinear maps
$\triangleleft : \mathfrak{g} \times \mathfrak{L} \to
\mathfrak{g}$ and $\triangleright: \mathfrak{L} \times
\mathfrak{g} \to \mathfrak{g}$ such that
\begin{equation}\eqlabel{catdisc}
(g \triangleleft x) \triangleleft y = (g \triangleleft y)
\triangleleft x, \quad  x \rhd (g \triangleleft y) = (x \rhd g)
\triangleleft y = - \, x \rhd (y \rhd g)
\end{equation}
for all $g \in \mathfrak{g}$ and $x$, $y \in \mathfrak{L}$. We
denote by ${}_{\mathfrak{L}}{\mathcal D}{\mathcal
M}_{\mathfrak{L}}$ the category of all discrete representations of
$\mathfrak{L}$ having as morphisms the linear maps which are
compatible with the left and the right action of $\mathfrak{L}$.
\end{definition}

Let $\mathfrak{L}$ be a vector space and $(\mathfrak{g}, \,
\triangleleft, \, \triangleright) \in {}_{\mathfrak{L}}{\mathcal
D}{\mathcal M}_{\mathfrak{L}}$ a fixed discrete representation
of $\mathfrak{L}$. We denote by $DZL^2_{(\triangleleft, \,
\triangleright)} \, (\mathfrak{L}, \, \mathfrak{g})$ the set of
all \emph{discrete $2$-cocycles}, i.e. all bilinear maps $f:
\mathfrak{L} \times \mathfrak{L} \to \mathfrak{g}$ satisfying the
compatibility condition \equref{discret1}. Let
$DHL^2_{(\triangleleft, \, \triangleright)} \, (\mathfrak{L}, \,
\mathfrak{g})$ be the quotient $DZL^2_{(\triangleleft, \,
\triangleright)} \, (\mathfrak{L}, \, \mathfrak{g})/
\approx_{m}^a$ and we call it the discrete cohomological group. We
have obtained the following:

\begin{corollary}\colabel{cazuabspargere}
Let $\mathfrak{L}$ and $\mathfrak{g}$ be two vector spaces viewed
with the abelian Leibniz algebra structures. Then:
\begin{equation}\eqlabel{balsoi5}
{\mathbb H} {\mathbb L}^{2} \, \bigl(\mathfrak{L}_0, \,
\mathfrak{g}_0 \bigl) \, = \, \sqcup_{(\triangleleft, \,
\triangleright)} \, DHL^2_{(\triangleleft, \, \triangleright)} \,
(\mathfrak{L}, \, \mathfrak{g})
\end{equation}
where the coproduct on the right hand side is in the category
of sets over all pairs $(\triangleleft, \, \triangleright)$ such
that $(\mathfrak{g}, \, \triangleleft, \, \triangleright) \in
{}_{\mathfrak{L}}{\mathcal D}{\mathcal M}_{\mathfrak{L}}$.
\end{corollary}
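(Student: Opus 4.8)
The plan is to read off \equref{balsoi5} directly from the definition of the left-hand side as a quotient, namely ${\mathbb H}{\mathbb L}^{2}\bigl(\mathfrak{L}_0,\,\mathfrak{g}_0\bigr) = {\mathcal C}{\mathcal S}_{0}(\mathfrak{L}_0,\,\mathfrak{g}_0)/\approx_l$, by making both the crossed systems and the relation $\approx_l$ completely explicit in this doubly abelian situation. First I would pin down ${\mathcal C}{\mathcal S}_{0}(\mathfrak{L}_0,\,\mathfrak{g}_0)$ by specializing the axioms (CS0)--(CS7) of \prref{axiocross} to the case where both the bracket $[-,\,-]$ on $\mathfrak{L}$ and the bracket $[-,\,-]_{\mathfrak{g}}$ on $\mathfrak{g}$ vanish. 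Running through the eight conditions, (CS0), (CS1), (CS4) and (CS6) degenerate to the trivial identity $0=0$; (CS2) collapses to the first relation of \equref{catdisc}; (CS5) and (CS7) together give the remaining two relations of \equref{catdisc}; and (CS3), once every term carrying a bracket on $\mathfrak{L}$ is discarded, reduces to the discrete $2$-cocycle condition \equref{discret1}. Hence a triple $(\triangleleft,\,\triangleright,\,f)$ is a local crossed system of $\mathfrak{L}_0$ by $\mathfrak{g}_0$ exactly when $(\mathfrak{g},\,\triangleleft,\,\triangleright)\in {}_{\mathfrak{L}}{\mathcal D}{\mathcal M}_{\mathfrak{L}}$ is a discrete representation in the sense of \deref{discreterep} and $f\in DZL^2_{(\triangleleft,\,\triangleright)}(\mathfrak{L},\,\mathfrak{g})$ is a discrete $2$-cocycle.

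Next I would specialize the equivalence relation of \deref{echiaa}. Setting $[-,\,-]_{\mathfrak{g}} = [-,\,-]^{'}_{\mathfrak{g}} = 0$ and using the zero bracket on $\mathfrak{L}$, equations \equref{Mor2a} and \equref{Mor3a} lose their bracket terms and force $\triangleleft = \triangleleft'$ and $\triangleright = \triangleright'$, while \equref{Mor4a} drops the summands $[r(x),\,r(y)]_{\mathfrak{g}}$ and $r([x,\,y])$ and becomes precisely the relation \equref{metaab1} defining $\approx_{m}^{a}$. Thus on ${\mathcal C}{\mathcal S}_{0}(\mathfrak{L}_0,\,\mathfrak{g}_0)$ the relation $\approx_l$ reads: two local crossed systems are equivalent if and only if their underlying discrete representations coincide and their cocycles are $\approx_{m}^{a}$-cohomologous.

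Finally I would assemble the coproduct. The one genuinely structural point, as opposed to bookkeeping, is that $\approx_l$ can never relate two triples with distinct pairs $(\triangleleft,\,\triangleright)$, which is forced by the equalities $\triangleleft = \triangleleft'$, $\triangleright = \triangleright'$ just obtained; this is exactly what guarantees that the right-hand side is an honest coproduct of sets and not merely a surjective image. Consequently the partition of ${\mathcal C}{\mathcal S}_{0}(\mathfrak{L}_0,\,\mathfrak{g}_0)$ by the value of $(\triangleleft,\,\triangleright)$ descends to the quotient, the block indexed by a fixed discrete representation being $DZL^2_{(\triangleleft,\,\triangleright)}(\mathfrak{L},\,\mathfrak{g})/\approx_{m}^{a} = DHL^2_{(\triangleleft,\,\triangleright)}(\mathfrak{L},\,\mathfrak{g})$, and taking the disjoint union over all $(\mathfrak{g},\,\triangleleft,\,\triangleright)\in {}_{\mathfrak{L}}{\mathcal D}{\mathcal M}_{\mathfrak{L}}$ produces \equref{balsoi5}. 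I expect no serious difficulty beyond carefully verifying the collapse of the eight axioms in the first step; the only thing one must be sure to argue, rather than take for granted, is that the decomposition is a true disjoint union, for which the forced equalities of the two actions are exactly what is required.
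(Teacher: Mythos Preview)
Your proposal is correct and follows essentially the same route as the paper: the corollary is stated immediately after a discussion that specializes (CS0)--(CS7) and \deref{echiaa} to the doubly abelian setting, observes that the equivalence forces $\triangleleft=\triangleleft'$ and $\triangleright=\triangleright'$, and then reads off the coproduct decomposition by fixing the pair of actions. Your write-up is in fact slightly more explicit than the paper's in tracking which axioms collapse to which relations, but the argument is the same.
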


\coref{cazuabspargere} reduces the computation of ${\mathbb H}
{\mathbb L}^{2} \, \bigl(\mathfrak{L}_0, \, \mathfrak{g}_0 \bigl)$
to a pure linear algebra problem. We shall give an explicit
example below: first we describe all crossed products $k^n_0 \,
\# \, k_0$ and then we shall compute ${\mathbb H} {\mathbb L}^{2}
\, \bigl(k_0, \, k^n_0 \bigl)$. The crossed products $k^n_0 \, \#
\, k_0$ will be parameterized by a very interesting set of
matrices, which we denote by ${\mathcal T} (n)$, and which consist of
all triples $(A, \, B, \, \gamma) \in {\rm M}_n (k) \times {\rm
M}_n (k) \times k^n$ satisfying the following compatibilities:
\begin{equation}\eqlabel{meta13}
AB = BA = - B^2, \qquad B \gamma = 0
\end{equation}

\begin{corollary}\colabel{meta10}
Let $n$ be a positive integer. Then any crossed product $k^n_0 \, \# \, k_0$ is isomorphic to a
Leibniz algebra denoted by $k^{n+1}_{(A, \, B, \, \gamma)}$ which has the basis $\{E_1, \cdots,  E_{n+1}\}$ and
the bracket defined by:
\begin{eqnarray*}
\{E_i, \, E_{n+1} \} := \sum_{j=1}^n \, a_{ji} \, E_j, \quad
\{E_{n+1}, \, E_{i} \} := \sum_{j=1}^n \, b_{ji} \, E_j, \quad
\{E_{n+1}, \, E_{n+1} \} := \sum_{j=1}^n \, \gamma_{j} \, E_j \eqlabel{matan+1}
\end{eqnarray*}
for all $i = 1, \cdots, n$ and $(A = (a_{ij}), \, B = (b_{ij}), \,  \gamma = (\gamma_i) \,) \in {\mathcal T} (n)$. Furthermore,
${\mathbb H} {\mathbb L}^{2} \, \bigl(k_0, \,
k^n_0 \bigl) \, \cong {\mathcal T} (n)/ \approx$, where $\approx$ is the following relation
on ${\mathcal T} (n)$: $(A, \, B, \,  \gamma) \approx (A', \, B', \,  \gamma') $ if and only if $A = A'$, $B = B'$
and there exist $r \in k^n$ such that $\gamma = \gamma' + (B+A) r$.
\end{corollary}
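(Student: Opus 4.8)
The plan is to turn the decomposition of \coref{cazuabspargere} (formula \equref{balsoi5}) into a concrete matrix computation, exploiting that $\mathfrak{L} = k_0$ is one-dimensional. Fix a basis $\{x\}$ of $\mathfrak{L} = k$ and a basis $\{g_1, \dots, g_n\}$ of $\mathfrak{g} = k^n$, and set $E_i := (g_i, 0)$ for $1 \le i \le n$ together with $E_{n+1} := (0, x)$ inside the crossed product. Since every bilinear map out of the line $\mathfrak{L}$ is determined by its value on $x$, a discrete representation $(\triangleleft, \triangleright)$ is encoded by the two operators $\Delta_x = -\triangleleft x$ and $D_x = x \triangleright -$ on $\mathfrak{g}$, i.e. by matrices $A = (a_{ij})$, $B = (b_{ij}) \in {\rm M}_n(k)$ via $g_i \triangleleft x = \sum_j a_{ji} g_j$ and $x \triangleright g_i = \sum_j b_{ji} g_j$, while a discrete $2$-cocycle $f$ is determined by the single vector $\gamma := f(x,x) \in k^n$. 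Reading off \equref{cotg2} with these data yields exactly the three brackets in the statement, all remaining brackets vanishing because $\mathfrak{g}_0$ and $\mathfrak{L}_0$ are abelian.

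Next I would translate the two defining conditions into the relations cutting out $\mathcal{T}(n)$. Because $\dim \mathfrak{L} = 1$ forces $y = z = x$ in \equref{catdisc}, the first identity $(g \triangleleft x)\triangleleft y = (g \triangleleft y) \triangleleft x$ becomes vacuous, whereas the chain $x \rhd (g \triangleleft x) = (x \rhd g) \triangleleft x = - x \rhd (x \rhd g)$ reads, in matrix form, $BA = AB = -B^2$. Likewise, specialising the discrete $2$-cocycle condition \equref{discret1} to $x = y = z$ collapses its right-hand side to $0$ and leaves $x \rhd f(x,x) = 0$, that is $B\gamma = 0$. Hence the triples $(\triangleleft, \triangleright, f)$ range precisely over $(A, B, \gamma)$ satisfying \equref{meta13}, which is the set $\mathcal{T}(n)$, and this correspondence is a bijection onto the crossed products $k^n_0 \, \# \, k_0$ presented as $k^{n+1}_{(A,B,\gamma)}$.

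For the classification up to cohomology I would feed this dictionary into the equivalence $\approx_m^a$ of \equref{metaab1}. Since a linear map $r : \mathfrak{L} \to \mathfrak{g}$ is nothing but a vector $r := r(x) \in k^n$, the relation fixes $\triangleleft = \triangleleft'$ and $\triangleright = \triangleright'$ (hence $A = A'$, $B = B'$) and, evaluating \equref{metaab1} at $x = y$, imposes $\gamma = \gamma' + x \triangleright' r(x) + r(x) \triangleleft' x = \gamma' + (B + A) r$. Combined with the coproduct decomposition \equref{balsoi5}, this identifies ${\mathbb H}{\mathbb L}^{2}(k_0, k^n_0)$ with $\mathcal{T}(n)/\!\approx$ for the stated relation, completing the proof.

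I do not expect a genuine obstacle here: the whole argument is the one-dimensional specialisation of machinery already established, so the only real task is the careful bookkeeping of which axioms of \equref{catdisc} and \equref{discret1} survive when $\mathfrak{L}$ is a line, together with the matrix conventions. The one point to watch is the index convention $g_i \triangleleft x = \sum_j a_{ji} g_j$, which is what makes $\Delta_x$, $D_x$ act as $A$, $B$ (rather than their transposes) and thereby produces $AB = BA = -B^2$ with exactly the signs asserted.
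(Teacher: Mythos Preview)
Your proposal is correct and follows essentially the same approach as the paper: both identify the bilinear data $(\triangleleft,\triangleright,f)$ with a triple $(A,B,\gamma)$ via the one-dimensionality of $\mathfrak{L}=k_0$, check that the crossed-system axioms reduce to $AB=BA=-B^2$ and $B\gamma=0$, and read off the equivalence relation from \equref{metaab1}. The only cosmetic difference is that you phrase the argument through the coproduct decomposition \equref{balsoi5}, whereas the paper works directly with the crossed-system axioms; the actual computations are identical.
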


\begin{proof}
Any bilinear map $\triangleleft : k^n \times k \to k^n$ (resp.
$\triangleright: k \times k^n \to k^n$) is uniquely determined by a
linear map $\lambda: k^n \to k^n$ (resp. $\Lambda : k^n \to k^n$)
by the formula $x\triangleleft a = a \lambda (x)$ (resp. $a
\triangleright x = a \Lambda (x)$), for all $a \in k$, $x \in k^n$.
We denote by $A$ (resp. $B$) the matrix associated to $\lambda$
(resp. $\Lambda$) in the canonical basis $\{e_1, \cdots, e_n\}$ of $k^n$. In the same way
any bilinear map $f : k \times k \to k^n$ is uniquely determined by
an element $\gamma \in k^n$ via the formula $f (a, b) = ab \,
\gamma$. Now, we can easily see that the system $(\triangleleft =
\triangleleft_{\lambda}, \, \triangleright
=\triangleright_{\Lambda}, \, f = f_{\gamma}, \, [-, \, -]_{k^n_0}
:= 0)$ is a crossed system if and only if the triple $(A, B,
\gamma)$ satisfies the compatibilities \equref{meta13}, that is $(A,
B, \gamma) \in {\mathcal T} (n)$. The bracket on the Leibniz
algebra $k^{n+1}_{(A, \, B, \, \gamma)}$ is exactly the one given by
\equref{cotg2}, for the crossed product $k^n_0 \, \# \, k_0$
associated to the triple $(\triangleleft =
\triangleleft_{\lambda}, \, \triangleright
=\triangleright_{\Lambda}, \, f = f_{\gamma})$. We have taken as a basis in $k^n_0 \, \# \, k_0$ the
canonical one: $E_1 = (e_1, 0), \cdots $, $E_n = (e_n, 0)$ and $E_{n+1} = (0, 1)$.
Finally, the equivalence relation on ${\mathcal T} (n)$
is precisely the one of given by \equref{metaab1}, written
equivalently for $(A, \, B, \, \gamma)$.
\end{proof}

\section{Co-flag Leibniz algebras. Examples.} \selabel{coflag}
In this section we provide a way to compute ${\mathbb G} {\mathbb
H} {\mathbb L}^{2} \, (\mathfrak{L}, \, \mathfrak{g})$ using a
recursive algorithm for the class of Leibniz algebras as defined
below:

\begin{definition} \delabel{coflaglbz}
Let $\mathfrak{L}$ be a Leibniz algebra and $E$ a vector space. A
Leibniz algebra structure $[- , \, -]_E$ on $E$ is called a
\emph{co-flag Leibniz algebra over $\mathfrak{L}$} of if there
exists a positive integer $n$ and a finite chain of epimorphisms
of Leibniz algebras
\begin{equation} \eqlabel{lant}
\mathfrak{L}_n : = (E, [- , \, -]_E)
\stackrel{\pi_{n}}{\longrightarrow} \mathfrak{L}_{n-1}
\stackrel{\pi_{n-1}}{\longrightarrow} \mathfrak{L}_{n-2} \, \cdots
\, \stackrel{\pi_{2}}{\longrightarrow} \mathfrak{L}_1
\stackrel{\pi_{1}} {\longrightarrow} \mathfrak{L}_{0} :=
\mathfrak{L} {\longrightarrow} 0
\end{equation}
such that ${\rm dim}_k ( {\rm Ker} (\pi_{i}) ) = 1$, for all $i =
1, \cdots, n$. A finite dimensional Leibniz algebra is called a
\emph{co-flag Leibniz algebra} if it is a co-flag Leibniz algebra
over $\{0\}$.
\end{definition}

All co-flag Leibniz structures over $\mathfrak{L}$
can be completely described by a recursive reasoning where the key
step is the case $n =1$: it describes, only in terms of some
elements associated to $\mathfrak{L}$, all Leibniz algebras of
dimension $1 + {\rm dim}_k (\mathfrak{L})$ that have a Leibniz
projection on $\mathfrak{L}$. Then we shall explicitly compute
${\mathbb G} {\mathbb H} {\mathbb L}^{2} \, (\mathfrak{L}, \, k)$.
The tool we use is the following concept:

\begin{definition} \delabel{coflag}
Let $\mathfrak{L}$ be a Leibniz algebra. A \emph{co-flag datum of
$\mathfrak{L}$} is a triple $(\lambda, \, \Lambda, \, f)$, where
$\lambda$, $\Lambda : \mathfrak{L} \to k$ are linear maps, $f :
\mathfrak{L} \times \mathfrak{L} \to k$ is a bilinear map such
that:
\begin{eqnarray*}
&& \lambda ([x, \, y]) = \Lambda ([x, \, y]) = 0, \qquad \Lambda
(x) \Lambda (y) = - \, \Lambda (x) \lambda (y) \eqlabel{coflag1}
\\
&& f([x, \, y], \, z) - f([x, \, z], \, y) - f(x, \, [y, \, z]) =
\Lambda (x) f (y, \, z) - \lambda (z) f(x, \, y) + \lambda (y)
f(x, \, z) \eqlabel{coflag2}
\end{eqnarray*}
for all $x$, $y$, $z\in \mathfrak{L}$. We denote by ${\mathcal C}
{\mathcal F} \, (\mathfrak{L})$ the set of all co-flag data of
$\mathfrak{L}$.
\end{definition}

If $\mathfrak{L}$ is perfect (i.e. $[\mathfrak{L}, \,
\mathfrak{L}] = \mathfrak{L}$), then ${\mathcal C} {\mathcal F} \,
(\mathfrak{L})$ is identical with the set $ZL^2 (\mathfrak{L}, k)$
of all abelian $2$-cocycles, i.e. all bilinear maps $f :
\mathfrak{L} \times \mathfrak{L} \to k$ satisfying the
compatibility condition for any $x$, $y$, $z\in \mathfrak{L}$:
$$
f([x, \, y], \, z) - f([x, \, z], \, y) - f(x, \, [y, \, z]) = 0
$$
The next proposition shows that the set of co-flag data
${\mathcal C} {\mathcal F} \, (\mathfrak{L})$ parameterizes the set
of all crossed systems of $\mathfrak{L}$ by a $1$-dimensional
vector space $\mathfrak{g}$. It also describes the first Leibniz
algebras $\mathfrak{L}_1$ from the exact sequence \equref{lant}.

\begin{proposition}\prlabel{coflagdim1}
Let $\mathfrak{L}$ be a Leibniz algebra and $\mathfrak{g}$ a
$1$-dimensional vector space with a basis $\{g_0\}$. Then there
exists a bijection between the set ${\mathcal C} {\mathcal S} \,
(\mathfrak{L}, \mathfrak{g})$ of all crossed systems of
$\mathfrak{L}$ by $\mathfrak{g}$ and the set ${\mathcal C}
{\mathcal F} \, (\mathfrak{L})$ of all co-flag data of
$\mathfrak{L}$ such that the crossed system $\Lambda(\mathfrak{L},
\mathfrak{g}) = \bigl(\triangleleft, \, \triangleright, \,
\tilde{f}, \, [-, \, -]_{\mathfrak{g}} \bigl)$ corresponding to
$(\lambda, \, \Lambda, \, f) \in {\mathcal C} {\mathcal F} \,
(\mathfrak{L})$ is given for any $x$, $y \in \mathfrak{L}$ by:
\begin{equation}\eqlabel{crosfla1}
g_0 \triangleleft x := \lambda (x) g_0, \quad  x \triangleright
g_0 := \Lambda(x) g_0, \quad \tilde{f} (x, y) := f(x, y) g_0,
\quad \left[-, \, -\right]_{\mathfrak{g}} := 0
\end{equation}
The crossed product $k \# \mathfrak{L}$ associated to the crossed
system \equref{crosfla1} for $\mathfrak{g} : = k$ and $g_0 := 1$
is denoted by $\mathfrak{L}_{(\lambda, \, \Lambda, \, f)}$. It has
the bracket given for any $\alpha$, $\beta \in k$ and $x$, $y \in
\mathfrak{L}$ by:
\begin{equation}\eqlabel{cofpas1}
\{ (\alpha,\, x), \, (\beta, \, y) \} = \bigl( \alpha \lambda (y)
+ \beta \Lambda (x) + f(x, \, y), \, [x, \, y] \bigl)
\end{equation}
Any Leibniz algebra that has a Leibniz projection on
$\mathfrak{L}$ with the kernel of dimension $1$ is isomorphic to
$\mathfrak{L}_{(\lambda, \, \Lambda, \, f)}$, for some $(\lambda,
\, \Lambda, \, f) \in {\mathcal C} {\mathcal F} \,
(\mathfrak{L})$.
\end{proposition}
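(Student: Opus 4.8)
The plan is to specialize the general crossed-system machinery developed in \prref{axiocross} to the case where $\mathfrak{g}$ is one-dimensional, and then read off the defining conditions of a co-flag datum as the surviving axioms. First I would fix the basis $\{g_0\}$ of $\mathfrak{g}$ and observe that, since every bilinear map landing in a one-dimensional space is scalar-valued against $g_0$, any pre-crossed datum $\bigl(\triangleleft, \, \triangleright, \, \tilde{f}, \, [-, \, -]_{\mathfrak{g}}\bigl)$ is completely encoded by linear maps $\lambda, \, \Lambda : \mathfrak{L} \to k$ and a bilinear map $f : \mathfrak{L} \times \mathfrak{L} \to k$ via the formulas \equref{crosfla1}, together with the bracket on $\mathfrak{g}$. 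The key point forcing $[-, \, -]_{\mathfrak{g}} = 0$ is that a one-dimensional Leibniz algebra satisfying the Leibniz identity must have trivial bracket (indeed $[g_0, g_0]_{\mathfrak{g}} = c \, g_0$ for some scalar $c$, and the Leibniz law \equref{lz} applied to $(g_0, g_0, g_0)$ forces $c^2 g_0 = 0$, hence $c = 0$); this disposes of (CS0) and makes all the bracket-on-$\mathfrak{g}$ terms in (CS1)--(CS7) vanish.

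The second step is to substitute \equref{crosfla1} into each of the axioms (CS0)--(CS7) and simplify. With $[-, \, -]_{\mathfrak{g}} = 0$, axioms (CS1), (CS6) become vacuous, and (CS2), (CS4), (CS5), (CS7) reduce to the scalar identities constituting the definition of a co-flag datum. Concretely, I expect (CS2) together with (CS5) and (CS7) to collapse — after cancelling $g_0$ — into the conditions $\lambda([x,y]) = \Lambda([x,y]) = 0$ and the quadratic relation $\Lambda(x)\Lambda(y) = -\Lambda(x)\lambda(y)$ of \deref{coflag}, while (CS3) becomes precisely the displayed $2$-cocycle identity relating $f$ to $\lambda$ and $\Lambda$. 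The routine but slightly delicate part is checking that no further independent scalar constraints emerge and that each co-flag condition is genuinely equivalent to (rather than merely implied by) the corresponding axiom; this is where I would be most careful, tracking which of (CS2), (CS5), (CS7) produces the derivation condition on $\lambda$, the anti-derivation condition on $\Lambda$, and the commuting relation $x \rhd (y \rhd g_0) = -\, x \rhd (g_0 \triangleleft y)$ that yields the $\Lambda$-$\lambda$ product relation.

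Having established the bijection ${\mathcal C}{\mathcal S}(\mathfrak{L}, \mathfrak{g}) \cong {\mathcal C}{\mathcal F}(\mathfrak{L})$, the bracket formula \equref{cofpas1} follows immediately by evaluating the crossed-product bracket \equref{brackcrosspr} on elements $(\alpha, x), (\beta, y)$ after identifying $\mathfrak{g} = k$ and $g_0 = 1$: the term $g \triangleleft y$ becomes $\alpha\lambda(y)$, the term $x \triangleright h$ becomes $\beta\Lambda(x)$, the cocycle contributes $f(x,y)$, and the $\mathfrak{L}$-component is $[x,y]$. Finally, the last assertion — that every Leibniz algebra admitting a Leibniz projection onto $\mathfrak{L}$ with one-dimensional kernel is isomorphic to some $\mathfrak{L}_{(\lambda, \, \Lambda, \, f)}$ — is not a new computation but a direct application of \thref{extensioliecros}: such a projection realizes the algebra as an extension of $\mathfrak{L}$ by a one-dimensional $\mathfrak{g}$, hence it is cohomologous to a crossed product $\mathfrak{g} \# \mathfrak{L}$, which by the bijection just established is exactly $\mathfrak{L}_{(\lambda, \, \Lambda, \, f)}$ for the co-flag datum corresponding to that crossed system. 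The main obstacle throughout is purely the bookkeeping in the second step — ensuring the eight axioms reduce cleanly to the two displayed lines of \deref{coflag} with no redundancy and no omission.
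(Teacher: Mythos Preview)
Your proposal is correct and follows exactly the paper's approach: specialize the axioms (CS0)--(CS7) of \prref{axiocross} to one-dimensional $\mathfrak{g}$, use that the only Leibniz bracket on $\mathfrak{g}$ is trivial, and verify that the surviving scalar identities are precisely the co-flag conditions, with the final assertion coming from \thref{extensioliecros}. One minor bookkeeping slip: (CS4) is also vacuous (both sides involve only the zero bracket on $\mathfrak{g}$), so the non-trivial content comes from (CS2), (CS3), (CS5), (CS7) --- with (CS2) giving $\lambda([x,y])=0$, (CS7) giving $\Lambda(x)\Lambda(y)=-\Lambda(x)\lambda(y)$, (CS5) combined with (CS7) giving $\Lambda([x,y])=0$, and (CS3) giving the cocycle identity.
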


\begin{proof}
We have to compute all crossed systems $\Lambda(\mathfrak{L},
\mathfrak{g}) = \bigl(\triangleleft, \, \triangleright, \,
\tilde{f}, \, [-, \, -]_{\mathfrak{g}} \bigl)$ of $\mathfrak{L}$
by $\mathfrak{g}$, i.e. all bilinear maps satisfying the axioms
(CS0)-(CS7). Since $\mathfrak{g}$ has dimension $1$ and
$(\mathfrak{g}, [-, \, -]_{\mathfrak{g}})$ is a Leibniz algebra we
must have that $ [-, \, -]_{\mathfrak{g}} = 0$ and any bilinear
map of $\Lambda(\mathfrak{L}, \mathfrak{g})$ is uniquely determined
by a system $(\lambda, \, \Lambda, \, f)$ consisting of two linear
maps $\lambda$, $\Lambda: \mathfrak{L} \to k$ and a bilinear map
$f : \mathfrak{L}\times \mathfrak{L} \to k$ via the formulas
\equref{crosfla1}. It is straightforward to see that the axioms
(CS1)-(SC7) hold if and only if the compatibilities of
\deref{coflag} corresponding for $(\lambda, \, \Lambda, \, f)$
hold.
\end{proof}

We shall classify the Leibniz algebras $\mathfrak{L}_{(\lambda, \,
\Lambda, \, f)}$ by computing ${\mathbb G} {\mathbb H} {\mathbb
L}^{2} \, (\mathfrak{L}, \, k)$: this is the first explicit
classification result for the GE-problem and the key step in the
classification of all co-flag Leibniz algebras over
$\mathfrak{L}$.

\begin{theorem}\thlabel{clascoflagl}
Let $\mathfrak{L}$ be a Leibniz algebra, $E$ a vector space and
$\pi : E \to \mathfrak{L}$ an epimorphism of vector spaces such
that ${\rm Ker} (\pi)$ has dimension $1$. Then
$$
{\rm Gext} \, (E, \mathfrak{L}) \cong {\mathbb G} {\mathbb H}
{\mathbb L}^{2} \, (\mathfrak{L}, \, k) \cong {\mathcal C}
{\mathcal F} \, (\mathfrak{L})/ \approx
$$
where $\approx$ is the following equivalent relation on ${\mathcal
C} {\mathcal F} \, (\mathfrak{L})$: $(\lambda, \, \Lambda, \, f)
\approx (\lambda', \, \Lambda', \, f')$ if and only if $\lambda =
\lambda'$, $\Lambda = \Lambda'$ and there exists a linear map $r :
\mathfrak{L} \to k$ such that for any $x$, $y\in \mathfrak{L}$:
\begin{equation}\eqlabel{nebun3}
f(x, \, y) = f' (x, \, y) - r([x, \, y]) + \Lambda (x) r(y) + r(x)
\lambda(y)
\end{equation}
The bijection between ${\mathcal C} {\mathcal F} \,
(\mathfrak{L})/ \approx$ and ${\rm Gext} \, (E, \mathfrak{L})$ is
given by $ \overline{(\lambda, \, \Lambda, \, f)} \mapsto
\mathfrak{L}_{(\lambda, \, \Lambda, \, f)} $.
\end{theorem}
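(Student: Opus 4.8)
The plan is to obtain both displayed bijections by chaining results already in hand and to reserve the genuine effort for transporting the equivalence relation. First, since ${\rm Ker}\,(\pi)$ is one-dimensional we may identify $\mathfrak{g} = {\rm Ker}\,(\pi)$ with $k$. Then \thref{main1222} furnishes at once the bijection ${\rm Gext}\,(E, \mathfrak{L}) \cong {\mathbb G}{\mathbb H}{\mathbb L}^2\,(\mathfrak{L},\, k)$, together with the identification ${\mathbb G}{\mathbb H}{\mathbb L}^2\,(\mathfrak{L},\, k) = {\mathcal C}{\mathcal S}\,(\mathfrak{L},\, k)/\!\approx$. This reduces the whole statement to the level of crossed systems and the relation $\approx$ of \deref{echiaa}. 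Next I would invoke \prref{coflagdim1}, which gives the explicit bijection between ${\mathcal C}{\mathcal S}\,(\mathfrak{L},\, k)$ and the set ${\mathcal C}{\mathcal F}\,(\mathfrak{L})$ of co-flag data, encoded by \equref{crosfla1} via $g_0 \triangleleft x = \lambda(x) g_0$, $x \triangleright g_0 = \Lambda(x) g_0$, $\tilde f(x,y) = f(x,y) g_0$, with $[-, \, -]_{\mathfrak{g}} = 0$ forced by ${\rm dim}\,\mathfrak{g} = 1$.

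The substance of the proof is the final isomorphism: I must verify that the relation $\approx$ of \deref{echiaa} on ${\mathcal C}{\mathcal S}\,(\mathfrak{L},\, k)$ translates, under this bijection, exactly into the relation \equref{nebun3} on ${\mathcal C}{\mathcal F}\,(\mathfrak{L})$. Here one-dimensionality is decisive: because $[-, \, -]_{\mathfrak{g}} = 0$, every bracket term in \equref{Mor2a}, \equref{Mor3a} and \equref{Mor4a} vanishes, so \equref{Mor2a} and \equref{Mor3a} collapse to $\lambda = \lambda'$ and $\Lambda = \Lambda'$, and all terms quadratic in $r$ disappear. The only surviving content is \equref{Mor4a}, and I would rewrite its two action terms using the explicit forms of $\triangleright'$ and $\triangleleft'$: writing $r(y) = r(y) g_0$ and reading off the coefficient of $g_0$ gives $x \triangleright' r(y) = \Lambda'(x) r(y) g_0$ and $r(x) \triangleleft' y = r(x) \lambda'(y) g_0$. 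Since $\Lambda' = \Lambda$ and $\lambda' = \lambda$, equation \equref{Mor4a} becomes precisely \equref{nebun3}.

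I do not anticipate a genuine obstacle, only routine bilinear bookkeeping. The single point requiring care is to confirm that the witnessing linear map $r : \mathfrak{L} \to \mathfrak{g}$ in the crossed-system relation and the map $r : \mathfrak{L} \to k$ in the co-flag relation are literally the same object under the identification $\mathfrak{g} = k$, $g_0 := 1$, so that the two relations are witnessed by the same family of maps and the bijection is genuinely relation-preserving in both directions. Since the two bijections furnished by \thref{main1222} and \prref{coflagdim1} are already equivalence-preserving by construction, transporting $\approx$ correctly then yields ${\rm Gext}\,(E, \mathfrak{L}) \cong {\mathcal C}{\mathcal F}\,(\mathfrak{L})/\!\approx$, with the induced assignment $\overline{(\lambda,\, \Lambda,\, f)} \mapsto \mathfrak{L}_{(\lambda,\, \Lambda,\, f)}$ reading off directly from the crossed product of \equref{cofpas1}.
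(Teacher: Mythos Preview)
Your proposal is correct and follows exactly the approach of the paper's own proof, which simply invokes \thref{main1222} and \prref{coflagdim1} and observes that the equivalence relation of \deref{echiaa} specializes to \equref{nebun3}. You have merely spelled out that specialization in full detail, which is fine.
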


\begin{proof}
It follows from \thref{main1222} and \prref{coflagdim1} once we
observe that the compatibility conditions from \deref{echiaa},
imposed for the crossed system \equref{crosfla1}, take the form
given in the statement of the theorem.
\end{proof}

\begin{remark}\relabel{nebun5}
The way the equivalence relation on ${\mathcal C} {\mathcal
F} \, (\mathfrak{L})$ in \thref{clascoflagl} is defined suggests
the decomposition as a coproduct of ${\mathbb G} {\mathbb H}
{\mathbb L}^{2} \, (\mathfrak{L}, \, k) \cong {\mathcal C}
{\mathcal F} \, (\mathfrak{L})/ \approx$ as follows: we fix a pair
of linear maps $(\lambda, \Lambda) \in {\rm Hom}_k (\mathfrak{L},
\, k) \times {\rm Hom}_k (\mathfrak{L}, \, k)$ satisfying the
compatibility conditions:
\begin{equation}\eqlabel{nebun4}
\lambda ([x, \, y]) = \Lambda ([x, \, y]) = 0, \qquad \Lambda (x)
\Lambda (y) = - \, \Lambda (x) \lambda (y)
\end{equation}
for all $x$, $y \in \mathfrak{L}$. For such a pair $(\lambda,
\Lambda)$ we denote by $ZL^2_{(\lambda, \Lambda)} \,
(\mathfrak{L}, \, k)$ the set of all $(\lambda,
\Lambda)$-cocycles, i.e. bilinear maps $f: \mathfrak{L} \times
\mathfrak{L} \to k$ satisfying the compatibility
$$
f([x, \, y], \, z) - f([x, \, z], \, y) - f(x, \, [y, \, z]) =
\Lambda (x) f (y, \, z) - \lambda (z) f(x, \, y) + \lambda (y)
f(x, \, z)
$$
for all $x$, $y$, $z\in \mathfrak{L}$. Two $(\lambda,
\Lambda)$-cocycles $f$ and $f'$ are local cohomologous and we denote
this by $f \approx_{l, a} f'$ if and only if there exists a linear
map $r : \mathfrak{L} \to k$ such that \equref{nebun3} holds. We
denote by $HL^2_{(\lambda, \Lambda)} \, (\mathfrak{L}, \, k)$ the
quotient set $ZL^2_{(\lambda, \Lambda)} \, (\mathfrak{L}, \,
k)/\approx_{l, a} $. Then we have:
\begin{equation}\eqlabel{nebun6}
{\mathbb G} {\mathbb H} {\mathbb L}^{2} \, (\mathfrak{L}, \, k)
\cong {\mathcal C} {\mathcal F} \, (\mathfrak{L})/ \approx \,\,
\cong \, \sqcup_{(\lambda, \Lambda)} \, HL^2_{(\lambda, \Lambda)}
\, (\mathfrak{L}, \, k)
\end{equation}
where the coproduct is computed over all pairs
$(\lambda, \Lambda) \in {\rm Hom}_k (\mathfrak{L}, \, k) \times
{\rm Hom}_k (\mathfrak{L}, \, k)$ satisfying \equref{nebun4}.
\end{remark}

Next we shall highlight the efficiency of \thref{clascoflagl} in
classifying co-flag algebras over a given Leibniz algebra. First
we recall that \cite[Example 3.11]{am-2013c} describes and classifies
all $4$-dimensional Leibniz algebras that contain a given
$3$-dimensional Leibniz algebra $\mathfrak{L}$ as a subalgebra.
Now we shall look at the dual case: we shall describe and classify
all $4$-dimensional Leibniz algebras that have a projection on the
same Leibniz algebra $\mathfrak{L}$.

\begin{example} \exlabel{calexpext}
Let $\mathfrak{L}$ be the $3$-dimensional Leibniz algebra with the
basis $\{e_{1}, e_{2}, e_{3}\}$ and the bracket defined by: $
[e_{1}, \, e_{3}] = e_{2}$, $[e_{3}, \, e_{3}] = e_{1}$. Then
there are two families of $4$-dimensional co-flag Leibniz algebras
over $\mathfrak{L}$. These are the Leibniz algebras denoted by
$\mathfrak{L}_{(a, b, c, d)}$ and $\mathfrak{L}^u_{(\beta,
\gamma)}$ having the basis $\{f_1, \, f_2, \, f_3, \, f_4 \}$ and
the bracket $\{ -, \, - \}$ given by:
\begin{eqnarray*}
&\mathfrak{L}_{(a, b, c, d)}:& \quad \{f_1, \, f_3 \} = f_2 + b \,
f_4, \,\, \{f_2, \, f_3 \} = c\, f_4, \,\, \{f_3, \, f_3 \} = f_1
+ d\, f_4, \, \{f_4, f_3 \} = a \, f_1 \\
&\mathfrak{L}^u_{(\beta, \gamma)}:& \quad \{f_1, \, f_3 \} = f_2 +
\beta \, f_4, \,\, \{f_2, \, f_3 \} = - \, \{f_3, \, f_2 \} = (u
\beta - u^2 \gamma) \, f_4, \\
&& \quad \{f_3, \, f_1 \} = - u \gamma \, f_4, \,\,\,\, \{f_3, \,
f_3 \} = f_1 + \gamma \, f_4, \,\,\, \{f_3, \, f_4 \} = - \,
\{f_4, \, f_3 \} = u \, f_4
\end{eqnarray*}
for all $a$, $b$, $c$, $d$, $\beta$, $\gamma \in k$ and $u \in
k^*$. Furthermore, ${\mathbb G} {\mathbb H} {\mathbb L}^{2} \, (\mathfrak{L}, \, k)
\cong k^* \, \sqcup \, k \, \sqcup \, k^*$,
and the equivalence classes of all non-cohomologous extensions of
$\mathfrak{L}$ by $k$ are represented by $\mathfrak{L}_{(a, \, 0,
\, 0, \, 0)}$, $\mathfrak{L}_{(0, \, 0, \, c, \, 0)}$, and
$\mathfrak{L}^u_{(0, \, 0)}$, for all $a \in k^*$, $c\in k$ and $u
\in k^*$.

The detailed computations are rather long but straightforward and
can be provided upon request. We just indicate the main steps of
the proof. First of all we compute the set ${\mathcal C}
{\mathcal F} \, (\mathfrak{L})$ of all co-flag data of
$\mathfrak{L}$. It can be shown that ${\mathcal C} {\mathcal F} \,
(\mathfrak{L})$ is a coproduct of two sets ${\mathcal C} {\mathcal
F} \, (\mathfrak{L}) = {\mathcal C} {\mathcal F}_1 \,
(\mathfrak{L}) \, \sqcup \, {\mathcal C} {\mathcal F}_2 \,
(\mathfrak{L})$, where ${\mathcal C} {\mathcal F}_1 \,
(\mathfrak{L}) \cong k^4$ with the flag datum $(\lambda, \Lambda,
f)$ associated to $(a, \, b, \, c, \, d) \in k^4$ given by:
\begin{equation}\eqlabel{primcofl}
\lambda (e_1) = \lambda (e_2) := 0, \,\, \lambda (e_3) := a, \quad
\Lambda : = 0, \quad
\begin{tabular}{c|ccc}
  $f$ & $e_1$ & $e_2$ & $e_3$ \\
  \hline
  $e_1$   & 0 & 0   & $b$   \\
  $e_2$   & 0 & 0   & $c$    \\
  $e_3$   & 0 & 0   & $d$   \\
\end{tabular}
\end{equation}
and ${\mathcal C} {\mathcal F}_2 \, (\mathfrak{L}) \cong k^*
\times k^2$ with the flag datum $(\lambda, \Lambda, f)$ associated
to $(u, \, \beta, \, \gamma) \in k^* \times k^2$ given by:
\begin{eqnarray}
&&\lambda (e_1) = \lambda (e_2) := 0, \,\, \lambda (e_3) := - u,
\quad \Lambda (e_1) = \Lambda (e_2) := 0, \,\, \Lambda (e_3) := u, \eqlabel{primcoflbb}\\
&&
\begin{tabular}{c|ccc}
  $f$ & $e_1$ & $e_2$ & $e_3$ \\
  \hline
  $e_1$   & 0 & 0   & $\beta$   \\
  $e_2$   & 0 & 0   & $u \beta - u^2 \gamma$    \\
  $e_3$   & $-u \gamma$ & $u^2 \gamma - u \beta$   & $\gamma$   \\
\end{tabular}
\end{eqnarray}
The Leibniz algebra $\mathfrak{L}_{(a, b, c, d)}$ (resp.
$\mathfrak{L}^u_{(\beta, \gamma)}$) is the crossed product $k \#
\mathfrak{L} = \mathfrak{L}_{(\lambda, \Lambda, f)}$ with the
bracket \equref{cofpas1} associated to $(\lambda, \Lambda, f) \in
{\mathcal C} {\mathcal F}_1 \, (\mathfrak{L})$ (resp. $(\lambda,
\Lambda, f) \in {\mathcal C} {\mathcal F}_2 \, (\mathfrak{L})$) --
we have taken $f_1 = (0, \, e_1)$, $f_2 = (0, \, e_2)$, $f_3 = (0,
\, e_3)$ and $f_4 = (1, \, 0)$ as a basis in the vector space
$\mathfrak{L}_{(\lambda, \Lambda, f)} = k \times \mathfrak{L}$.
Now, the equivalence relation \equref{nebun3} on the set
${\mathcal C} {\mathcal F} \, (\mathfrak{L})$ written equivalently
on $k^4$ takes the form: $(a, \, b, \, c, \, d) \approx_1 (a', \,
b', \, c', \, d')$ if and only if $a = a'$ and there exists $r_1$,
$r_2$, $r_3 \in k$ such that
$$
b = b' - r_2 + r_1 a, \quad c = c' + r_2 a, \quad d = d' - r_1 +
r_3 a
$$
A system of representatives of $\approx_1$ on $k^4$ is $\{(a, \, 0,
\, 0, \, 0) \, | \, \, a\in k^* \} \cup \{(0, \, 0, \, c, \, 0) \,
| \, \, c\in k \}$. This shows that $k^4/ \approx_1 \cong \, k^*
\sqcup k $. Finally, the same equivalent relation written this
time on $k^* \times k^2$ is: $(u, \, \beta, \, \gamma) \approx_2
(u', \, \beta', \, \gamma')$ if and only if $u = u'$ and there
exists $r_1$, $r_2 \in k$ such that
$$
\beta = \beta' - r_2 - r_1 u, \quad \gamma = \gamma' - r_1
$$
A system of representatives of $\approx_2$ on $k^* \times k^2$ is
$\{(u, \, 0, \, 0) \, | \, \, u\in k^* \}$, i.e. $k^* \times k^2/
\approx_2 \, \cong k^*$. This finishes the proof.
\end{example}

The next result is the dual version of \coref{meta10}: we describe
all crossed products $k_0 \, \# \, k^n_0$ (which are
$n+1$-dimensional metabelian Leibniz algebras that are co-flag
Leibniz algebras over $k^n_0$) and we shall compute ${\mathbb G}
{\mathbb H} {\mathbb L}^{2} \, \bigl(k^n_0, \, k \bigl)$. In order
to do this we introduce two sets that will parameterize all
crossed products $k_0 \, \# \, k^n_0$. Let ${\mathcal C} {\mathcal
F}_1 \, (n)$ be the set of all pairs $(\Lambda, \, f)$ consisting
of a non-trivial linear map $\Lambda: k^n \to k$ and a bilinear
map $f: k^n \times k^n \to k$ satisfying the following
compatibility for any $x$, $y$, $z\in k^n$:
\begin{equation} \eqlabel{cofma1}
\Lambda (x) f(y, \, z) - \Lambda (y) f(x, \, z) + \Lambda (z) f(x, \, y) = 0
\end{equation}
Two pairs $(\Lambda, \, f)$ and $(\Lambda', \, f') \in {\mathcal C} {\mathcal F}_1 \, (n)$ are
cohomologous and we denote this by $(\Lambda, \, f)\approx_1 (\Lambda', \, f')$ if and only if
$\Lambda = \Lambda'$ and there exists a linear map
$r: k^n \to k$ such that $f (x, \, y) = f'(x, \, y) + \Lambda (x) r(y) - r(x) \Lambda(y)$, for all $x$, $y\in k^n$.

Let ${\mathcal C} {\mathcal F}_2 \, (n)$ be the set of all pairs
$(\lambda, \, f)$ consisting of a linear map $\lambda: k^n \to k$
and a bilinear map $f: k^n \times k^n \to k$ satisfying the
following compatibility condition for any  $x$, $y$, $z\in k^n$:
\begin{equation} \eqlabel{cofma2}
\lambda (y) f(x, \, z) = \lambda (z) f(x, \, y)
\end{equation}
Two pairs $(\lambda, \, f)$ and $(\lambda', \, f') \in {\mathcal
C} {\mathcal F}_2 \, (n)$ are cohomologous and we denote this by
$(\Lambda, \, f)\approx_2 (\Lambda', \, f')$ if and only if
$\lambda = \lambda'$ and there exists a linear map $r: k^n \to k$
such that $f (x, \, y) = f'(x, \, y) + \lambda (x) r(y)$, for all
$x$, $y\in k^n$. With these notations we have:

\begin{proposition} \prlabel{metacof1}
Let $n$ be a positive integer. Then any crossed product $k_0 \, \#
\, k^n_0$ is isomorphic to a Leibniz algebra of the form
$k^{n+1}_{(\Lambda, \, f)}$ or $k^{n+1}_{(\lambda, \, f)}$ as
defined below. These algebras have the basis $\{f_1, \, f_2,
\cdots, f_{n+1} \}$ and the bracket $\{ -, \, - \}$ given for any
$i = 1, \cdots, n$ by:
\begin{eqnarray*}
&k^{n+1}_{(\Lambda, \, f)}:& \qquad \{f_i, \, f_{j} \} := f (e_i,
\, e_j) \, f_{n+1}, \quad \{f_i, \, f_{n+1} \} := \Lambda (e_i) \,
f_{n+1}
\end{eqnarray*}
for all $(\Lambda, \, f) \in {\mathcal C} {\mathcal F}_1 \, (n)$. $k^{n+1}_{(\Lambda, \, f)}$ is a Lie algebra.
The Leibniz algebras $k^{n+1}_{(\lambda, \, f)}$ has the bracket defined for any $i = 1, \cdots, n$ by:
\begin{eqnarray*}
&k^{n+1}_{(\lambda, \, f)}:& \qquad \{f_i, \, f_{j} \} := f (e_i,
\, e_j) \, f_{n+1}, \quad \{f_{n+1}, \, f_{i} \} :=  \lambda (e_i)
\, f_{n+1}
\end{eqnarray*}
for all $(\lambda, \, f) \in {\mathcal C} {\mathcal F}_2 \, (n)$. Furthermore,
\begin{equation} \eqlabel{metamare}
{\mathbb G} {\mathbb H} {\mathbb L}^{2} \, \bigl(k^n_0, \, k \bigl) \, \cong \, {\mathcal C} {\mathcal F}_1 \, (n)/\approx_1 \,
\sqcup \,\, {\mathcal C} {\mathcal F}_2 \, (n)/\approx_2
\end{equation}
\end{proposition}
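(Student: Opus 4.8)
The plan is to derive everything as a specialization of \thref{clascoflagl} to the abelian base $\mathfrak{L} = k^n_0$. Since $k_0 \, \# \, k^n_0$ is exactly a crossed product with $1$-dimensional kernel, \prref{coflagdim1} already guarantees that each such algebra is isomorphic to some $\mathfrak{L}_{(\lambda, \, \Lambda, \, f)}$ with $(\lambda, \, \Lambda, \, f) \in {\mathcal C}{\mathcal F}(k^n_0)$, while \thref{clascoflagl} gives ${\mathbb G}{\mathbb H}{\mathbb L}^{2}(k^n_0, \, k) \cong {\mathcal C}{\mathcal F}(k^n_0)/\approx$. So the whole problem reduces to describing the set ${\mathcal C}{\mathcal F}(k^n_0)$ of co-flag data of the abelian Leibniz algebra together with the relation \equref{nebun3} on it.

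First I would unpack \deref{coflag} under the hypothesis that every bracket on $\mathfrak{L}=k^n_0$ vanishes. The conditions $\lambda([x,y]) = \Lambda([x,y]) = 0$ hold automatically, the quadratic condition becomes $\Lambda(x)\bigl(\Lambda(y) + \lambda(y)\bigr) = 0$ for all $x$, $y$, and the cocycle condition collapses to $\Lambda(x) f(y,z) - \lambda(z) f(x,y) + \lambda(y) f(x,z) = 0$. The quadratic condition forces a clean dichotomy: either $\Lambda = 0$, in which case $\lambda$ is unconstrained and the cocycle condition reads $\lambda(y) f(x,z) = \lambda(z) f(x,y)$, i.e. \equref{cofma2}; or $\Lambda \neq 0$, which forces $\lambda = -\Lambda$ and turns the cocycle condition into $\Lambda(x) f(y,z) - \Lambda(y) f(x,z) + \Lambda(z) f(x,y) = 0$, i.e. \equref{cofma1}. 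These two alternatives are mutually exclusive and exhaustive, so ${\mathcal C}{\mathcal F}(k^n_0) = {\mathcal C}{\mathcal F}_1(n) \sqcup {\mathcal C}{\mathcal F}_2(n)$, the first piece recording the pairs $(\Lambda, \, f)$ with $\Lambda$ nontrivial, the second recording the pairs $(\lambda, \, f)$.

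Next I would read off the two families of algebras by evaluating the crossed-product bracket \equref{cofpas1} in the basis $f_i := (0, \, e_i)$ for $1 \le i \le n$ and $f_{n+1} := (1, \, 0)$. A direct substitution yields $\{f_i, \, f_j\} = f(e_i, e_j)\, f_{n+1}$, $\{f_i, \, f_{n+1}\} = \Lambda(e_i)\, f_{n+1}$, $\{f_{n+1}, \, f_i\} = \lambda(e_i)\, f_{n+1}$ and $\{f_{n+1}, \, f_{n+1}\} = 0$; specializing to each branch of the dichotomy recovers exactly $k^{n+1}_{(\Lambda, \, f)}$ and $k^{n+1}_{(\lambda, \, f)}$ as stated. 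For the first branch I would also verify the Lie property: setting $y = z$ in \equref{cofma1} and using $\Lambda \neq 0$ gives $f(y,y) = 0$, so $f$ is alternating, and together with $\Lambda(e_i) + \lambda(e_i) = 0$ this forces $\{v, v\} = 0$ for every $v$, whence $k^{n+1}_{(\Lambda, \, f)}$ is a Lie algebra.

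Finally I would specialize the relation \equref{nebun3}. Because $[x,y] = 0$ the term $r([x,y])$ drops out and the coboundary becomes $f(x,y) = f'(x,y) + \Lambda(x) r(y) + r(x) \lambda(y)$; since it fixes both $\lambda$ and $\Lambda$ it cannot move a datum across the coproduct. On the branch $\Lambda \neq 0$, $\lambda = -\Lambda$ this reads $f(x,y) = f'(x,y) + \Lambda(x)r(y) - r(x)\Lambda(y)$, which is precisely $\approx_1$; on the branch $\Lambda = 0$ it reads $f(x,y) = f'(x,y) + r(x)\lambda(y)$, which is the relation $\approx_2$. Passing to quotients on each piece separately then delivers the decomposition \equref{metamare}. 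The main thing to get right is the quadratic dichotomy of the second paragraph, since it is the single structural fact that splits the classifying object into its two announced components; the rest is bookkeeping, namely checking that the cocycle conditions \equref{cofma1}, \equref{cofma2} and the coboundary relations $\approx_1$, $\approx_2$ are exactly the shadows of \deref{coflag} and \equref{nebun3} on each branch.
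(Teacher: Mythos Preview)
Your proposal is correct and follows essentially the same route as the paper: reduce to \thref{clascoflagl}, show that the co-flag data of $k^n_0$ split via the dichotomy $\Lambda\neq 0$ (forcing $\lambda=-\Lambda$) versus $\Lambda=0$, read off the brackets in the basis $f_i=(0,e_i)$, $f_{n+1}=(1,0)$, and specialize \equref{nebun3} on each branch. You even add the explicit check that $k^{n+1}_{(\Lambda,f)}$ is a Lie algebra, which the paper asserts without argument; note incidentally that your derived coboundary on the $\Lambda=0$ branch is $f(x,y)=f'(x,y)+r(x)\lambda(y)$, which is the correct specialization of \equref{nebun3} (the paper's displayed $\approx_2$ has the factors transposed).
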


\begin{proof}
In the first step we prove that the set ${\mathcal C} {\mathcal F}
\, (k^n_0)$ of all co-flag data of the abelian Leibniz algebra
$k^n_0$ is the coproduct of ${\mathcal C} {\mathcal F}_1 \, (n)$
and ${\mathcal C} {\mathcal F}_2 \, (n)$. Indeed, we can easily
see that a triple $(\lambda, \, \Lambda, \, f) \in {\mathcal C}
{\mathcal F} \, (k^n_0)$ if and only if
$$
\Lambda (x) \bigl(\lambda (y) + \Lambda(y) \bigl) = 0, \quad
\Lambda (x) f(y, \, z) = \lambda (z) f(x, \, y) - \lambda (y) f(x, \, z)
$$
for all $x$, $y$, $z \in k^n$. The first equation splits the
description of the elements $(\lambda, \Lambda, f) \in {\mathcal
C} {\mathcal F} \, (k^n_0)$ into two cases. The first one
corresponds to  $\Lambda \neq 0$, the non-trivial map: if this
happens, then $(\lambda, \, \Lambda, \, f) \in {\mathcal C}
{\mathcal F} \, (k^n_0)$ if and only if $\lambda = - \Lambda$ and
$(\Lambda, \, f) \in {\mathcal C} {\mathcal F}_1 \, (n)$. In the
second case, if $\Lambda = 0$ then $(\lambda, \, \Lambda := 0, \,
f) \in {\mathcal C} {\mathcal F} \, (k^n_0)$ if and only if
$(\lambda, \, f) \in {\mathcal C} {\mathcal F}_2 \, (n)$. This
shows that ${\mathcal C} {\mathcal F} \, (k^n_0) \, = \, {\mathcal
C} {\mathcal F}_1 \, (n) \, \sqcup \,\, {\mathcal C} {\mathcal
F}_2 \, (n)$.

Let $\{e_1, \cdots,  e_n\}$ be the canonical basis of $k^n$ and
$f_1 := (0, e_1), \cdots$, $f_n := (0, e_n)$, $f_{n+1} := (1, 0)$
the basis in $k \times k^n$. Then the Lie algebra
$k^{n+1}_{(\Lambda, \, f)}$ is the crossed product $k \# k^n_{0}$
defined by \equref{cofpas1} associated to the co-flag datum
$(-\Lambda, \, \Lambda, \, f)$, for all $(\Lambda, \, f) \in
{\mathcal C} {\mathcal F}_1 \, (n)$. The Leibniz algebra
$k^{n+1}_{(\lambda, \, f)}$ is the crossed product $k \# k^n_{0}$
defined by \equref{cofpas1} associated to the co-flag datum
$(\lambda, \, \Lambda := 0, \, f)$, for all $(\lambda, \, f) \in
{\mathcal C} {\mathcal F}_2 \, (n)$. The last statement follows
once we observe that the equivalent relation $\approx$ on the set
${\mathcal C} {\mathcal F} \, (k^n_0)$ from \thref{clascoflagl}
written on its disjoint components ${\mathcal C} {\mathcal F}_1 \,
(n)$ and ${\mathcal C} {\mathcal F}_2 \, (n)$ takes the forms
$\approx_1$ and respectively $\approx_2$ as defined above.
\end{proof}

\prref{metacof1} can be detailed for any given $n$. In particular,
for $n = 1$ we obtain:

\begin{corollary} \colabel{coflagdim2}
Any $2$-dimensional co-flag (or metabelian) Leibniz algebra
is isomorphic to one of the following Leibniz
algebras:
\begin{eqnarray*}
&k^2_{a, c}:& \qquad \{f_1, \, f_2 \} = a  \,
f_1, \quad \{f_2, \, f_2\} = c \, f_1, \qquad a, \, c \in k \\
&k^2_{b}:& \qquad \{f_2, \, f_1\} = - \{f_1, \, f_2\} = b \, f_1,
\qquad b \in k^*
\end{eqnarray*}
Furthermore, ${\mathbb G} {\mathbb H} {\mathbb L}^{2} \, (k_0, \,
k) \cong k \, \sqcup \, k^* \, \sqcup \, k^*$ and the set of
equivalent classes of all non-cohomologous extensions of $k_{0}$
by $k$ is represented by $k^2_{a, 0}$, $k^2_{0, c}$ or $k^2_{b}$,
for all $a\in k$ and $b$, $c\in k^*$.
\end{corollary}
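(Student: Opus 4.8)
The plan is to derive the result as the $n=1$ instance of \prref{metacof1}. First I record why, in dimension $2$, the descriptions \emph{co-flag} and \emph{metabelian} agree and single out exactly the crossed products $k_0 \, \# \, k_0$. Every $1$-dimensional Leibniz algebra is abelian: if $[g,\,g]=\alpha g$, then the Leibniz law \equref{lz} applied to $(g,g,g)$ forces $\alpha^2 = 0$, hence $\alpha = 0$. Thus a $2$-dimensional co-flag algebra over $\{0\}$ has its intermediate quotient $\mathfrak{L}_1$ equal to $k_0$, so it is an extension of $k_0$ by $k_0$, i.e. a metabelian algebra; conversely a $2$-dimensional metabelian algebra is an extension of abelian by abelian, hence a crossed product $k_0 \, \# \, k_0$ by \prref{axiocross} and \thref{extensioliecros}, which is co-flag over $k_0$. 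Either way the classification reduces to the $n=1$ case of \prref{metacof1}.

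Next I would specialize the two parametrizing sets. For $n=1$ a linear map $k\to k$ is a scalar, so I put $u := \Lambda(1)$, $s := \lambda(1)$ and $\phi := f(1,1)$. Evaluating \equref{cofma1} at $x=y=z=1$ gives $u\phi = 0$; since on ${\mathcal C}{\mathcal F}_1(1)$ the map $\Lambda$ is nonzero, this forces $\phi = 0$, i.e. $f = 0$, whence ${\mathcal C}{\mathcal F}_1(1)\cong k^*$ through $u$. Evaluating \equref{cofma2} at $x=y=z=1$ yields the tautology $s\phi = s\phi$, so ${\mathcal C}{\mathcal F}_2(1)\cong k\times k$ through $(s,\phi)$. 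Writing the bracket \equref{cofpas1} in the basis $f_1 := (1,0)$ (the kernel $g_0$) and $f_2 := (0,1)$ (the base copy of $k$) gives $\{f_1,\,f_1\}=0$, $\{f_1,\,f_2\}=s\,f_1$, $\{f_2,\,f_1\}=u\,f_1$ and $\{f_2,\,f_2\}=\phi\,f_1$. On ${\mathcal C}{\mathcal F}_1(1)$ one has $\Lambda\neq 0$ and $\lambda = -\Lambda$ (so $s=-u$) together with $f=0$, and the bracket collapses to $\{f_2,\,f_1\}=-\{f_1,\,f_2\}=u\,f_1$: this is the Lie algebra $k^2_b$ with $b=u$. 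On ${\mathcal C}{\mathcal F}_2(1)$ one has $\Lambda = 0$ (so $u=0$), and the bracket becomes $\{f_1,\,f_2\}=s\,f_1$, $\{f_2,\,f_2\}=\phi\,f_1$: this is $k^2_{a,c}$ with $a=s$ and $c=\phi$.

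Finally I would pass to the quotients using \equref{metamare}. On ${\mathcal C}{\mathcal F}_1(1)$ the relation $\approx_1$ only records $\Lambda$ (as $f=0$ already), so ${\mathcal C}{\mathcal F}_1(1)/ \approx_1 \,\cong k^*$, yielding the representatives $k^2_b$ with $b\in k^*$. On ${\mathcal C}{\mathcal F}_2(1)$ the relation $\approx_2$ fixes $\lambda$ and alters $f$ by $\phi\mapsto\phi + s\,t$ with $t=r(1)$: if $s\neq 0$ then $\phi$ is removable and the class is $k^2_{a,0}$ with $a=s\in k^*$, while if $s=0$ then $\phi$ is an invariant and the class is $k^2_{0,c}$ with $c\in k$. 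Hence ${\mathcal C}{\mathcal F}_2(1)/ \approx_2 \,\cong k^* \sqcup k$, and assembling gives ${\mathbb G}{\mathbb H}{\mathbb L}^2(k_0,\,k)\cong k^* \sqcup (k^* \sqcup k)\cong k \sqcup k^* \sqcup k^*$. The only point needing care — and the main (purely bookkeeping) obstacle — is the trivial algebra $k^2_{0,0}$: it occurs in the $s=0$ slice at $c=0$, and absorbing it into the $a$-family lets me present the representatives in the clean form $k^2_{a,0}$ for $a\in k$, $k^2_{0,c}$ for $c\in k^*$, and $k^2_b$ for $b\in k^*$, as stated.
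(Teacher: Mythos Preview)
Your proof is correct and follows exactly the approach the paper indicates: specialize \prref{metacof1} to $n=1$, identify ${\mathcal C}{\mathcal F}_1(1)\cong k^*$ and ${\mathcal C}{\mathcal F}_2(1)\cong k^2$, and compute the quotients by $\approx_1$ and $\approx_2$. Your preamble on why co-flag and metabelian coincide in dimension $2$, and your remark on where the trivial class $k^2_{0,0}$ sits, spell out details the paper leaves implicit.
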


\begin{remark} \relabel{clasificarea2dim}
The three families of Leibniz algebras of \coref{coflagdim2} give
the classification of $2$-dimensional co-flag Leibniz algebras
from the viewpoint of the GE-problem: i.e. up to an isomorphism
of Leibniz algebras that stabilizes $k$ and co-stabilizes $k$. If we
classify these Leibniz algebras only up to an isomorphism then the only non-isomorphic Leibniz algebras are the
following: $k^2_{0} := k^2_{0, 0}$ (the
abelian Leibniz algebra), $k^2_{1, 0}$, $k^2_{0, 1}$ and the Lie
algebra $k^2_1$. These are all Leibniz
algebras of dimension $2$ (\cite{Go}, \cite{Lod2}).
\end{remark}

Applying \prref{metacof1} for $n = 2$ we obtain:

\begin{corollary}\colabel{coflag3.1}
Any $3$-dimensional co-flag Leibniz
algebra over $k^2_{0}$ is cohomologous with one of the Leibniz
algebras of the six families described below. More precisely, we
have:
\begin{equation} \eqlabel{coflag3.1bal}
{\mathbb G} {\mathbb H} {\mathbb L}^{2} \, (k^2_{0}, \, k) \,
\cong \, \sqcup_{i = 1}^6 \, HL^2_{i} \, (k^2_{0}, \, k), \quad {\rm where:}
\end{equation}
$\bullet$ $HL^2_{1} \, (k^2_{0}, \, k) = k^* \times k$ and the corresponding equivalent classes of
non-cohomologous extensions of $k^2_{0}$ by $k$ are the Lie algebras with the bracket defined for any
$(\Lambda_1, \, \Lambda_2) \in k^* \times k$ by:
\begin{eqnarray*}
k^{3, \, 1}_{(\Lambda_1, \Lambda_2)}: \qquad
\{f_1, \, f_3 \} = - \{f_3, \, f_1\} = \Lambda_1 \, f_3, \,\,\,\,
\{f_2, \, f_3 \} = - \{f_3, \, f_2 \} = \Lambda_2 \, f_3
\end{eqnarray*}
$\bullet$ $HL^2_{2} \, (k^2_{0}, \, k) = k^* $ and the corresponding equivalent classes of
non-cohomologous extensions of $k^2_{0}$ by $k$ are the Lie algebras with the bracket defined for any
$\Lambda_2 \in k^*$ by:
\begin{eqnarray*}
k^{3, \, 2}_{\Lambda_2}: \qquad
\{f_2, \, f_3 \} = - \{f_3, \, f_2\} = \Lambda_2 \, f_3
\end{eqnarray*}
$\bullet$ $HL^2_{3} \, (k^2_{0}, \, k) = k^4$ and the corresponding equivalent classes of
non-cohomologous extensions of $k^2_{0}$ by $k$ are the Leibniz algebras with the bracket defined for any
$(a, \, b, \, c, \, d) \in k^4$ by:
\begin{eqnarray*}
k^{3, \, 3}_{(a, b, c, d)}: \qquad
\{f_1, \, f_1 \} = a \, f_3, \,\,\,\, \{f_1, \, f_2 \} = b \, f_3, \,\,\,\,
\{f_2, \, f_1 \} = c \, f_3, \,\,\,\, \{f_2, \, f_2 \} = d \, f_3
\end{eqnarray*}
$\bullet$ $HL^2_{4} \, (k^2_{0}, \, k) = k^* $ and the corresponding equivalent classes of
non-cohomologous extensions of $k^2_{0}$ by $k$ are the Leibniz algebras with the bracket defined for any
$\lambda_2 \in k^*$ by:
\begin{eqnarray*}
k^{3, \, 4}_{\lambda_2}: \qquad \{f_3, \, f_2\} = \lambda_2 \, f_3
\end{eqnarray*}
$\bullet$ $HL^2_{5} \, (k^2_{0}, \, k) = k^* $ and the corresponding equivalent classes of
non-cohomologous extensions of $k^2_{0}$ by $k$ are the Leibniz algebras with the bracket defined for any
$\lambda_1 \in k^*$ by:
\begin{eqnarray*}
k^{3, \, 5}_{\lambda_1}: \qquad \{f_3, \, f_1\} = \lambda_1 \, f_3
\end{eqnarray*}
$\bullet$ $HL^2_{6} \, (k^2_{0}, \, k) = k^* \times k^* $ and the corresponding equivalent classes of
non-cohomologous extensions of $k^2_{0}$ by $k$ are the Leibniz algebras with the bracket defined for any
$(\lambda_1, \lambda_2) \in k^* \times k^*$ by:
\begin{eqnarray*}
k^{3, \, 6}_{(\lambda_1, \lambda_2)}: \qquad \{f_3, \, f_1\} = \lambda_1 \, f_3, \quad \{f_3, \, f_2\} =
\lambda_2 \, f_3
\end{eqnarray*}
\end{corollary}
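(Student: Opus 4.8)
The plan is to specialize \prref{metacof1} to $n = 2$ and then carry out the two resulting cohomology computations by hand. By \prref{metacof1} we already have ${\mathbb G} {\mathbb H} {\mathbb L}^{2} \, (k^2_0, \, k) \cong {\mathcal C} {\mathcal F}_1 \, (2)/\approx_1 \, \sqcup \, {\mathcal C} {\mathcal F}_2 \, (2)/\approx_2$, so the whole problem reduces to describing these two quotient sets. I would first fix coordinates: a linear form on $k^2$ is recorded by its values $(\Lambda_1, \Lambda_2)$ (resp. $(\lambda_1, \lambda_2)$) on the canonical basis $\{e_1, e_2\}$, and a bilinear map $f$ by the quadruple $f_{ij} := f(e_i, e_j)$. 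In these coordinates the defining conditions \equref{cofma1}, \equref{cofma2} and the two equivalence relations become linear conditions, so everything is elementary linear algebra; the representative brackets are then read off from \equref{cofpas1}.

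For ${\mathcal C} {\mathcal F}_1 \, (2)$ recall $\Lambda \neq 0$. Putting $y = x$ in \equref{cofma1} gives $\Lambda(z) f(x,x) = 0$ for all $z$, whence $f(x,x) = 0$; thus $f$ is alternating, hence antisymmetric and determined by the single scalar $f_{12}$. The remaining content of \equref{cofma1} is the identity $\Lambda \wedge f = 0$, which holds automatically since it is a $3$-form on the $2$-dimensional space $k^2$. Hence for each $\Lambda \neq 0$ the cocycle space is the line of antisymmetric forms. The coboundary $\Lambda(x) r(y) - r(x)\Lambda(y)$ has value $\Lambda_1 r_2 - \Lambda_2 r_1$ at $(e_1,e_2)$, and since $\Lambda \neq 0$ the functional $r \mapsto \Lambda_1 r_2 - \Lambda_2 r_1$ is surjective; therefore every cocycle is cohomologous to $f = 0$ and the fibre over each $\Lambda \neq 0$ is a single point. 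Stratifying $\{\Lambda \neq 0\}$ as $\{\Lambda_1 \neq 0\} \cong k^* \times k$ together with $\{\Lambda_1 = 0, \, \Lambda_2 \neq 0\} \cong k^*$ yields $HL^2_1$ and $HL^2_2$; feeding the datum $(-\Lambda, \, \Lambda, \, 0)$ into \equref{cofpas1} produces the brackets of $k^{3,1}_{(\Lambda_1,\Lambda_2)}$ and $k^{3,2}_{\Lambda_2}$, which are Lie algebras by \prref{metacof1}.

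For ${\mathcal C} {\mathcal F}_2 \, (2)$ I would split on $\lambda$. If $\lambda = 0$ then \equref{cofma2} is vacuous and the coboundary (the specialization of \equref{nebun3} with $\Lambda = 0$ over $k^2_0$, namely $f \mapsto f + r(x)\lambda(y)$) also vanishes, so $f$ ranges freely over $k^4$: this is $HL^2_3 \cong k^4$ with representative $k^{3,3}_{(a,b,c,d)}$. If $\lambda \neq 0$, then \equref{cofma2} forces each $f(e_i, -)$ to be proportional to $\lambda$, i.e. $f(x,y) = c(x)\lambda(y)$ for a unique linear $c$; but these are exactly the coboundaries $r(x)\lambda(y)$ (take $r = c$), so again every class reduces to $f = 0$ and the fibre over each $\lambda \neq 0$ is a point. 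Decomposing $\{\lambda \neq 0\}$ according to the support of $\lambda$ into $\{(\lambda_1, 0)\} \cong k^*$, $\{(0,\lambda_2)\} \cong k^*$ and $\{(\lambda_1,\lambda_2) : \lambda_1\lambda_2 \neq 0\} \cong k^* \times k^*$ gives $HL^2_5$, $HL^2_4$ and $HL^2_6$ respectively; the datum $(\lambda, \, 0, \, 0)$ in \equref{cofpas1} yields the stated brackets of $k^{3,5}_{\lambda_1}$, $k^{3,4}_{\lambda_2}$ and $k^{3,6}_{(\lambda_1,\lambda_2)}$. Assembling the two quotients through \equref{metamare} gives the announced coproduct $\sqcup_{i=1}^6 HL^2_i \, (k^2_0, \, k)$.

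The routine part is the bookkeeping; the genuinely delicate step is verifying that in both nonzero strata the coboundaries fill up the entire cocycle space, so that each fibre collapses to a single point. This is what forces the representative $f = 0$ and hence the clean parameterization of families $1$, $2$, $4$, $5$, $6$ by $\Lambda$ (resp. $\lambda$) alone. I would be most careful about the exact slot in which $r$ appears in the ${\mathcal C}{\mathcal F}_2$ coboundary --- it must be $r(x)\lambda(y)$, as dictated by \equref{nebun3} --- since this is precisely what makes the $c \mapsto r$ cancellation work; and about organizing the two punctured-plane stratifications so that the six disjoint pieces match the six families exactly.
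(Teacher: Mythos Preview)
Your proof is correct and follows exactly the paper's approach --- specialize \prref{metacof1} to $n=2$, compute ${\mathcal C}{\mathcal F}_1(2)/\!\approx_1$ and ${\mathcal C}{\mathcal F}_2(2)/\!\approx_2$ in coordinates, show that for $\Lambda\neq 0$ (resp.\ $\lambda\neq 0$) the coboundaries exhaust the cocycles so each fibre collapses to a point, and then stratify the two punctured planes --- only with considerably more computational detail than the paper supplies. Your insistence that the $\approx_2$-coboundary is $r(x)\lambda(y)$ (derived from \equref{nebun3} with $\Lambda=0$) rather than the paper's printed $\lambda(x)r(y)$ is in fact correct and is exactly what makes the $c\mapsto r$ cancellation go through.
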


\begin{proof}
We apply \prref{metacof1} for $n = 2$ detailing the formula
\equref{metamare}. The computations are long but straightforward
and we only indicate the main steps of the proof. A linear map
$\Lambda : k^2 \to k$ (resp. $\lambda : k^2 \to k$) is given by
two scalars $(\Lambda_1, \Lambda_2) \in k^2$ (resp. $(\lambda_1,
\lambda_2) \in k^2$). The first two families of Lie algebras from
the statement correspond to the case $\Lambda \neq 0$: more
precisely, $k^{3, \, 1}_{(\Lambda_1, \, \Lambda_2)}$ is associated
to $\Lambda_1 \neq 0$ and $k^{3, \, 2}_{\Lambda_2}$ is associated
to $\Lambda_1 = 0$ and $\Lambda_2 \neq 0$. We give a few more
details only for the first case: we can easily prove that
$(\Lambda, f) \in {\mathcal C} {\mathcal F}_1 \, (2)$ if and only
if the cocycle $f$ is given by: $f (e_1, \, e_1) = f (e_2, \, e_2)
= 0$ and $f (e_1, \, e_2) = - f (e_2, \, e_1) = a$, for some $a
\in k$. The crossed product $k \# k^2_{0}$ associated to this
co-flag datum is denoted by $k^{3, \, 1}_{(\Lambda_1, \,
\Lambda_2, \, a)}$, for any $(\Lambda_1, \, \Lambda_2, \, a) \in
k^* \times k \times k$. The bracket in $k^{3, \, 1}_{(\Lambda_1,
\, \Lambda_2, \, a)}$ is given by:
$$
\{f_1,  f_2 \} = - \{f_2,  f_1\} = a \, f_3, \,\, \{f_1, f_3 \} =
- \{f_3, f_1\} = \Lambda_1 \, f_3, \,\, \{f_2,  f_3 \} = - \{f_3,
f_2 \} = \Lambda_2 \, f_3
$$
It is straightforward to see that the set $k^* \times k \times
\{0\}$ is a system of representatives of the equivalent relation
$\approx_1$, that is any Lie algebra $k^{3, \, 1}_{(\Lambda_1, \,
\Lambda_2, \, a)}$ is cohomologous to $k^{3, \, 1}_{(\Lambda_1, \,
\Lambda_2, \, 0)}$ and this shows that $HL^2_{1} \, (k^2_{0}, \,
k) = k^* \times k$.

The rest of the proof continues along the same lines. We just
mention that the last four families of Leibniz algebras arise from
the case $\Lambda_1 = \Lambda_2 = 0$ which in turn can be
disjoined in four subcases depending on whether $\lambda_1$ or
$\lambda_2$ is equal to $0$ or not. For instance, the last family
of Leibniz algebras is associated to the subcase $\lambda_1 \neq 0
\neq \lambda_2$.
\end{proof}

Any Leibniz algebra from \coref{coflag3.1} is metabelian, being a
crossed product $k_0 \, \# \, k^2_0$. On the other hand the
crossed products $k^2_0 \, \# \, k$ are described in
\coref{meta10}. Thus we have completed the description of all
$3$-dimensional metabelian Leibniz algebras:

\begin{corollary}\colabel{metadim3clas}
Any $3$-dimensional metabelian Leibniz algebra is isomorphic to
$k^3_0$, $k^3_{(A, B, \gamma)}$, for some $(A, B, \gamma) \in
{\mathcal T} (2)$ or to one of the Leibniz algebras from the six
families listed in \coref{coflag3.1}.
\end{corollary}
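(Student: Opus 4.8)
The plan is to deduce the result purely as a synthesis of the two explicit classifications already in hand, \coref{meta10} and \coref{coflag3.1}, organised by a dimension count on the kernel of the defining extension. First I would unwind \deref{cotangent}: a $3$-dimensional metabelian Leibniz algebra $E$ comes equipped, by definition, with an abelian ideal $\mathfrak{a}$ such that the quotient $\mathfrak{b} := E/\mathfrak{a}$ is again abelian, i.e. a short exact sequence $0 \to \mathfrak{a} \to E \stackrel{\pi}{\to} \mathfrak{b} \to 0$ of Leibniz algebras with $\mathfrak{a} = {\rm Ker}(\pi)$. By \thref{extensioliecros} (and \prref{axiocross}, which pins down the reduced bracket \equref{cotg2}) the algebra $E$ is isomorphic to the crossed product $\mathfrak{a}_0 \, \# \, \mathfrak{b}_0$ of these two abelian Leibniz algebras. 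Thus it suffices to identify all such crossed products with $\dim_k(\mathfrak{a}) + \dim_k(\mathfrak{b}) = 3$.

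The second step is the case split according to $d := \dim_k(\mathfrak{a}) \in \{0, 1, 2, 3\}$. If $d \in \{0, 3\}$ then one of the two factors is zero, so $E$ coincides with the abelian Leibniz algebra $\mathfrak{a}_0$ or $\mathfrak{b}_0$, giving $E \cong k^3_0$. If $d = 2$ then $\dim_k(\mathfrak{b}) = 1$ and $E \cong k^2_0 \, \# \, k_0$; this is exactly the setting of \coref{meta10} with $n = 2$, so $E \cong k^3_{(A, \, B, \, \gamma)}$ for some $(A, \, B, \, \gamma) \in {\mathcal T}(2)$. If $d = 1$ then $\dim_k(\mathfrak{b}) = 2$ and $E \cong k_0 \, \# \, k^2_0$, a co-flag Leibniz algebra over $k^2_0$; by \coref{coflag3.1} it is then isomorphic to a member of one of the six families listed there. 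Since these cases are exhaustive, every $3$-dimensional metabelian Leibniz algebra appears in the asserted list.

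I do not expect a genuine obstacle: the entire content is carried by the earlier corollaries, and what remains is the elementary bookkeeping above. The one point I would flag explicitly is that \coref{metadim3clas} is a \emph{covering} statement, not an irredundant classification -- a single isomorphism class can admit several abelian-by-abelian presentations with different values of $d$ -- so I would deliberately not try to prove that the three blocks $\{k^3_0\}$, $\{k^3_{(A, \, B, \, \gamma)}\}$ and the six families are pairwise disjoint, as the statement makes no such claim.
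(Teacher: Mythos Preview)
Your proposal is correct and follows exactly the line the paper takes: the corollary has no proof in the paper beyond the two sentences immediately preceding it, which observe that \coref{meta10} (with $n=2$) handles the crossed products $k^2_0 \, \# \, k_0$ while \coref{coflag3.1} handles $k_0 \, \# \, k^2_0$. Your dimension split on $d = \dim_k(\mathfrak{a})$ and your explicit treatment of the degenerate cases $d \in \{0,3\}$ simply make this bookkeeping explicit, and your remark that the statement is a covering list rather than an irredundant classification is apt and consistent with the paper's intent.
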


Now we shall indicate how the recursive algorithm provided by
\thref{clascoflagl} works for the classification of co-flag
Leibniz algebras. The \coref{coflagdim2} is the first step in the
classification of finite dimensional co-flag Leibniz algebras. In
order to continue the description of all $3$-dimensional co-flag
Leibniz algebras we have to take instead of $\mathfrak{L}$ one of
the Leibniz algebras $k^2_{0}$, $k^2_{1, 0}$, $k^2_{0, 1}$ or
$k^2_1$ and then to compute the set ${\mathcal C} {\mathcal F} \,
(\mathfrak{L})$ of all its co-flag data. Then we have to describe
the associated crossed products $k \# \mathfrak{L} =
\mathfrak{L}_{(\lambda, \Lambda, f)}$, for all $(\lambda, \Lambda,
f) \in {\mathcal C} {\mathcal F} \, (\mathfrak{L})$. Finally, we
have to classify these Leibniz algebras by computing ${\mathbb G}
{\mathbb H} {\mathbb L}^{2} \, (\mathfrak{L}, \, k)$ using the
decomposition given by \equref{nebun6} of \reref{nebun5}.
\coref{coflag3.1} solved the case $\mathfrak{L} := k^2_{0}$, the
abelian Leibniz algebra of dimension $2$: it is relevant to
highlight the size of ${\mathbb G} {\mathbb H} {\mathbb L}^{2} \,
(k^2_{0}, \, k)$. To complete the description of all
$3$-dimensional co-flag Leibniz algebras we just have to repeat
the method of \coref{coflag3.1} by replacing the abelian Leibniz
algebra $k^2_0$ with $k^2_{1, 0}$, $k^2_{0, 1}$ and respectively
$k^2_1$. The question is left to the reader since it is an linear
algebra exercise.

We prefer to end the paper with an interesting example. Even if
all the above examples confirm that, as a general rule, ${\mathbb
G} {\mathbb H} {\mathbb L}^{2} \, (\mathfrak{L}, \, k)$ is a very
large object, we shall give a non-trivial example for which
${\mathbb G} {\mathbb H} {\mathbb L}^{2} \, (\mathfrak{L}, \, k)$
is a singleton. In order to do that, we observe that the formula
\equref{nebun6} is simplified if $\mathfrak{L}$ is a perfect
Leibniz algebra. In this case the only pair $(\lambda, \Lambda)
\in {\rm Hom}_k (\mathfrak{L}, \, k) \times {\rm Hom}_k
(\mathfrak{L}, \, k)$ satisfying the compatibility condition
\equref{nebun4} is the trivial one $(\lambda = \Lambda := 0)$.
Thus, we have that:
\begin{equation}\eqlabel{nebun7}
{\mathbb G} {\mathbb H} {\mathbb L}^{2} \, (\mathfrak{L}, \, k)
\cong {\mathcal C} {\mathcal F} \, (\mathfrak{L})/ \approx \,\,
\cong \, \, HL^2_{(0, 0)} \, (\mathfrak{L}, \, k)
\end{equation}

\begin{example}\exlabel{single}
Let $k$ be a field of characteristic $\neq 2$ and $\mathfrak{L} =
\mathfrak{sl} (2, k)$, the Lie algebra with a basis $\{e_1, \,
e_2, \, e_3\}$ and the usual bracket $[e_1, \, e_2] = e_3$, $[e_1,
\, e_3] = -2 \, e_1$ and $[e_2, \, e_3] = 2 \, e_2$. Then any
$4$-dimensional co-flag Leibniz algebra over $\mathfrak{sl} (2,
k)$ is isomorphic to the Lie algebra with the basis $\{f_1, \,
f_2, \, f_3, \, f_4 \}$ and the bracket given by:
\begin{eqnarray*}
\mathfrak{sl} (2, k)_{(a, b, c)}: \quad \, \{f_1, \, f_2 \} = f_3
+ a \, f_4, \,\,\, \{f_1, \, f_3 \} = -2 f_1 + b\, f_4, \,\,\,
\{f_2, \, f_3 \} = 2 f_2 + c \, f_4
\end{eqnarray*}
for all $a$, $b$, $c\in k$.  Furthermore, ${\mathbb G} {\mathbb H}
{\mathbb L}^{2} \, (\mathfrak{sl} (2, k), \, k) = \{0 \}$, i.e.
any $4$-dimensional Leibniz algebra that has a Leibniz projection on
$\mathfrak{sl} (2, k)$ is cohomologous with the direct product of
Lie algebras $k_0 \times \mathfrak{sl} (2, k)$.

The proof is similar to the one on \exref{calexpext}, but it is
easier since $\mathfrak{sl} (2, k)$ is perfect as a Lie
algebra. Routine computations show that the set ${\mathcal C}
{\mathcal F} \, (\mathfrak{sl} (2, k))$ of all co-flag data of
$\mathfrak{sl} (2, k)$ identify with $k^3$: the bijection is given such that
the co-flag datum $(\lambda, \Lambda, f)$ associated to $(a, \, b, \, c) \in k^3$ is
defined by:
\begin{equation}\eqlabel{panamea}
\lambda = \Lambda : = 0,  \qquad
\begin{tabular}{c|ccc}
  $f$ & $e_1$ & $e_2$ & $e_3$ \\
  \hline
  $e_1$   & 0 & $a$   & $b$   \\
  $e_2$   & $-a$ & 0   & $c$    \\
  $e_3$   & $-b$ & $-c$   & $0$   \\
\end{tabular}
\end{equation}
The Lie algebra $\mathfrak{sl} (2, k)_{(a, b, c)}$ is the
crossed product \equref{cofpas1} associated to this flag datum.
The equivalence relation \equref{nebun3} written on
$k^3$ becomes: $(a, \, b, \, c) \approx (a', \, b', \, c')$ if and
only if there exists $r_1$, $r_2$, $r_3 \in k$ such that
$$
a = a' - r_3, \quad b = b' - 2 r_1, \quad c = c' - 2 r_2
$$
Thus, the quotient set $k^3/\approx$ is a singleton having
$\overline{(0, \, 0, \, 0)}$ as unique element and we are done.
\end{example}

\end{document}